\definecolor{darkred}{RGB}{100,0,0}
\definecolor{darkgreen}{RGB}{0,100,0}
\definecolor{darkblue}{RGB}{0,0,150}
\def\s{{s_*}}
\def\reach{\rho}
\def\level{\cL}
\def\up{\cU}
\def\d{{\rm d}}
\def\ball{B}
\newtheorem{thm}{Theorem}[section]
\newtheorem{theorem}{Theorem}[section]
\newtheorem{lem}[thm]{Lemma}
\newtheorem{lemma}[thm]{Lemma}
\theoremstyle{remark}
\newtheorem{remark}[thm]{Remark}
\newtheorem{algorithm}{Algorithm}
\newtheorem{method}{Method}
\def\beq{\begin{equation}} 
\def\eeq{\end{equation}}
\def\beqn{\begin{eqnarray*}}
\def\eeqn{\end{eqnarray*}}
\def\Bitem{\begin{itemize}\setlength{\itemsep}{.2in}}
\def\bitem{\begin{itemize}\setlength{\itemsep}{.05in}}
\def\eitem{\end{itemize}}
\def\Benum{\begin{enumerate}\setlength{\itemsep}{.2in}}
\def\benum{\begin{enumerate}\setlength{\itemsep}{.05in}}
\def\eenum{\end{enumerate}}
\def\bmult{\begin{multline*}}
\def\emult{\end{multline*}}
\def\bcenter{\begin{center}}
\def\ecenter{\end{center}}
\def\bframe{\begin{frame}}
\def\eframe{\end{frame}}
\newcommand{\thmref}[1]{Theorem~\ref{thm:#1}}
\newcommand{\lemref}[1]{Lemma~\ref{lem:#1}}
\newcommand{\secref}[1]{Section~\ref{sec:#1}}
\newcommand{\figref}[1]{Figure~\ref{fig:#1}}
\newcommand{\algref}[1]{Algorithm~\ref{alg:#1}}
\newcommand{\methodref}[1]{Method~\ref{method:#1}}
\DeclareMathOperator*{\argmax}{arg\, max}
\DeclareMathOperator{\dist}{dist}
\def\cA{\mathcal{A}}
\def\cB{\mathcal{B}}
\def\cC{\mathcal{C}}
\def\cG{\mathcal{G}}
\def\cH{\mathcal{H}}
\def\cL{\mathcal{L}}
\def\cM{\mathcal{M}}
\def\cQ{\mathcal{Q}}
\def\cT{\mathcal{T}}
\def\cU{\mathcal{U}}
\def\cV{\mathcal{V}}
\def\cW{\mathcal{W}}
\def\cZ{\mathcal{Z}}
\def\bbR{\mathbb{R}}
\def\eps{\varepsilon}
\def\1{\mathbbm{1}}
\definecolor{purple}{rgb}{0.4,.1,.9}
\begin{document}
\thispagestyle{empty}

\title{An Asymptotic Equivalence between \\ the Mean-Shift Algorithm~and the Cluster Tree}
\author{
Ery Arias-Castro\footnote{University of California, San Diego, California, USA (\url{https://math.ucsd.edu/\~eariasca/})} 
\and 
Wanli Qiao\footnote{George Mason University, Fairfax, Virginia, USA (\url{https://mason.gmu.edu/\~wqiao/})}
}
\date{}
\maketitle

\begin{abstract}
Two important nonparametric approaches to clustering emerged in the 1970's: clustering by level sets or cluster tree as proposed by Hartigan, and clustering by gradient lines or gradient flow as proposed by Fukunaga and Hosteler.  
In a recent paper \cite{arias2021level}, we argue the thesis that these two approaches are fundamentally the same by showing that the gradient flow provides a way to move along the cluster tree. 
In making a stronger case, we are confronted with the fact the cluster tree does not define a partition of the entire support of the underlying density, while the gradient flow does.
In the present paper, we resolve this conundrum by proposing two ways of obtaining a partition from the cluster tree --- each one of them very natural in its own right --- and showing that both of them reduce to the partition given by the gradient flow under standard assumptions on the sampling density.

\medskip\noindent
{\em Keywords and phrases:}
clustering; level sets; cluster tree; gradient lines; gradient flow; MeanShift algorithm; dynamical systems; ordinary differential equations
\end{abstract}

\section{Introduction} 
\label{sec:introduction}

In the 1970's, two distinctively nonparametric approaches to clustering were proposed, and we argue in this paper that they are fundamentally the same. 

\subsection{The cluster tree}
One of these approaches was the one that Hartigan proposed in his pioneering book on the topic \cite{hartigan1975clustering}, which was published in 1975. In this perspective, the connected components of the upper-level sets of the population density are the candidate clusters, and are seen as ``regions of high density separated from other such regions by regions of low density". 
(Similar takes on clustering emerged around the same time, including in \cite{koontz1972nonparametric}, but historically this perspective has been attributed to Hartigan.)
This view of clustering is not complete in the sense that it depends on a choice of level, which is not at all obvious. Adopting an exploratory stance, Hartigan recommended looking at the entire tree structure, which he called the ``density-contour tree'' and is now better known as the {\em cluster tree}. This structure is the partial ordering between clusters that comes with the set inclusion operation: indeed, as defined, two clusters are either disjoint or one of them  contains the other. 
It is important to note that {\em the cluster tree does not provide a complete partitioning of the space}. It is in fact common to see the region outside the identified clusters as noise, even though in the typical situation this distinction or labeling does not appear natural.
See Figures~\ref{fig:level_sets_1d} and~\ref{fig:level_sets_2d} for illustrations in dimension~1 and~2, respectively.

\begin{figure}[!htpb]
\centering
\includegraphics[scale=0.7]{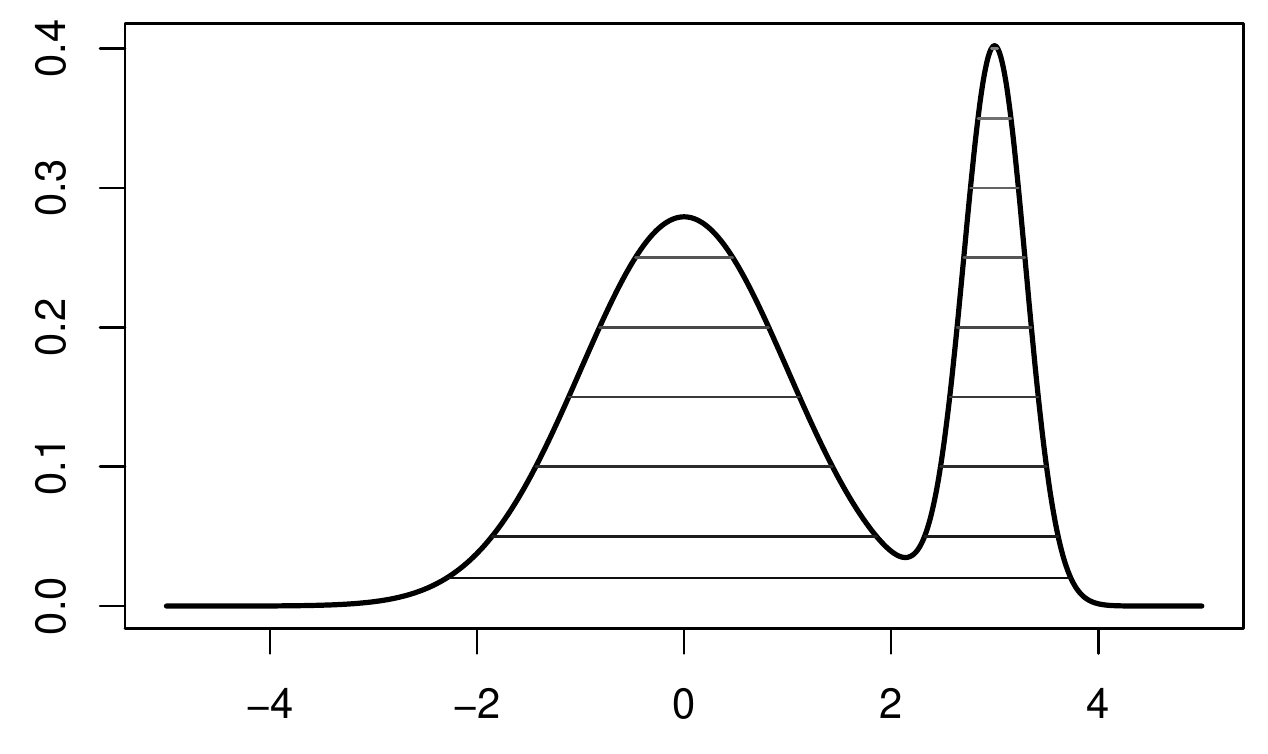}
\caption{A sample of upper level sets of a density in dimension $d = 1$ with two modes (which happens to be the mixture of two normal distributions). At any level $0 < t \le t_0$, where $t_0 \approx 0.0348$ is the value of the density at the local minimum, $\up_t$ is connected, and thus corresponds to the cluster at the level. At any level $t_0 < t \le t_1$, where $t_1 \approx 0.2792$ is the value of the density at its local maximum near $x=0$, $\up_t$ has exactly two connected components, and these are the clusters at that level. (At $t = t_1$, one of the clusters is a singleton.) Finally, at $t_1 < t \le t_2$, where $t_2 \approx 0.4021$ is the value of the density at its global maximum (near $x=3$), $\up_t$ is again connected, and is thus the cluster at that level. 
(At $t = t_2$, the cluster is a singleton.)}
\label{fig:level_sets_1d}
\end{figure}

\begin{figure}[!ht]
\centering
\includegraphics[scale=0.8]{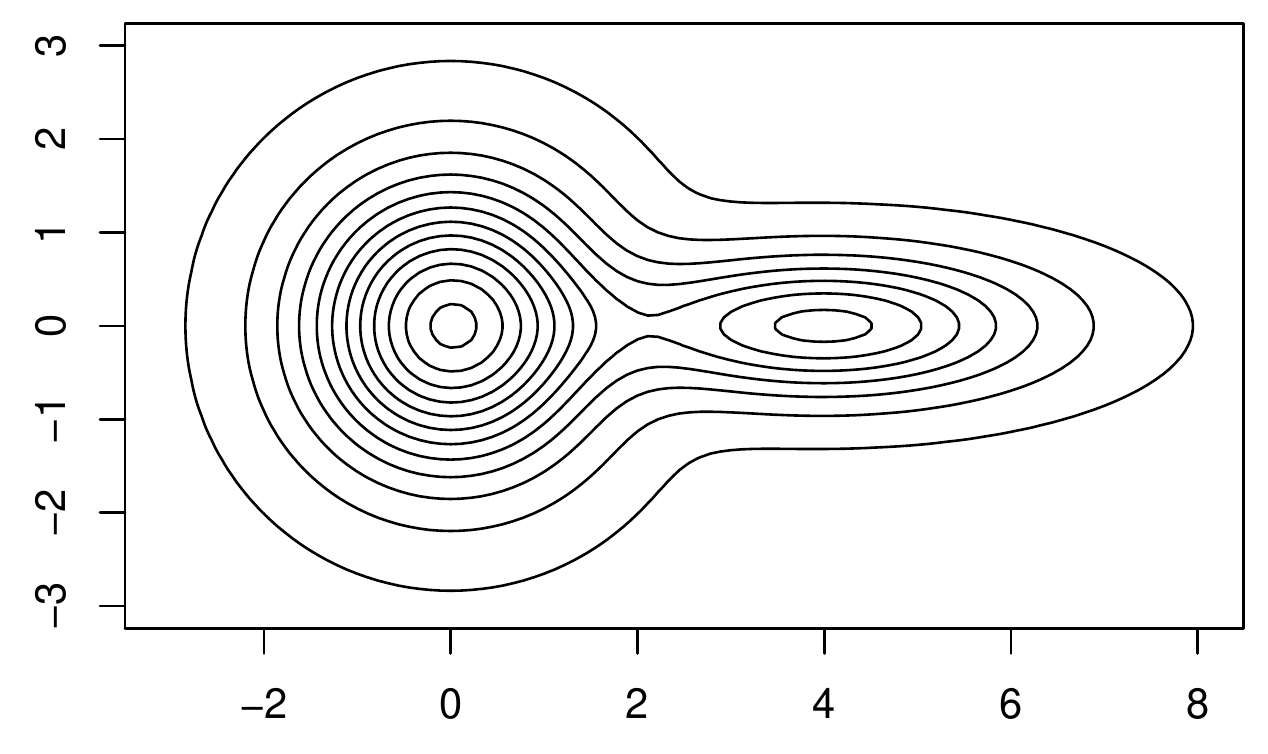}
\caption{A sample of upper level sets of a density in dimension $d = 2$ with two modes (which happens to be the mixture of two normal distributions, one with a non-scalar covariance matrix). The situation is similar to that of \figref{level_sets_1d}, where the number of connected components of the upper level set at $t$ is $=1$ one when $0 < t \le t_0$, where $t_0$ is the value of the density at the saddle point; $=2$ when $t_0 < t \le t_1$, where $t_1$ is the value of the density at the local (but not global) maximum; and $=1$ again when $t_1 < t \le t_2$, where $t_2$ is the maximum value of the density.} 
\label{fig:level_sets_2d}
\end{figure}

Despite not providing an actual clustering of the population (or sample, in practice), the perspective offered in the form of the cluster tree has been influential in statistical research: the estimation of the cluster tree is studied, for example, in \citep{stuetzle2003estimating, stuetzle2010generalized, rinaldo2012stability, chaudhuri2014consistent, wang2019dbscan, eldridge2015beyond}, and the choice of threshold the level is considered, for example, in \citep{sriperumbudur2012consistency, steinwart2015fully, steinwart2011adaptive}.
The estimation of level sets has itself received a lot of attention in the literature \citep{polonik1995measuring, tsybakov1997nonparametric,rigollet2009optimal,chen2017density,mason2009asymptotic,walther1997granulometric,rinaldo2010generalized,singh2009adaptive}.

\subsection{The gradient flow}
\label{sec:intro gradient flow}
The other approach is the one that Fukunaga and Hosteler proposed in an article \cite{fukunaga1975}, incidentally also published in 1975.
In that article, they proposed to use the gradient lines of the population density to move an arbitrary a point upward along the curve of steepest ascent in the topography given by the density. Under some regularity conditions related to Morse theory \citep{chacon2015population}, almost all points in the support of the density (i.e., the population) are in this way moved to a mode (i.e., a local maximum) of the density, and a partition of the population is obtained by grouping together the points ending at the same mode. 
See \figref{gradient_lines_2d} for an illustration.

\begin{figure}[!ht]
\centering
\includegraphics[scale=0.8]{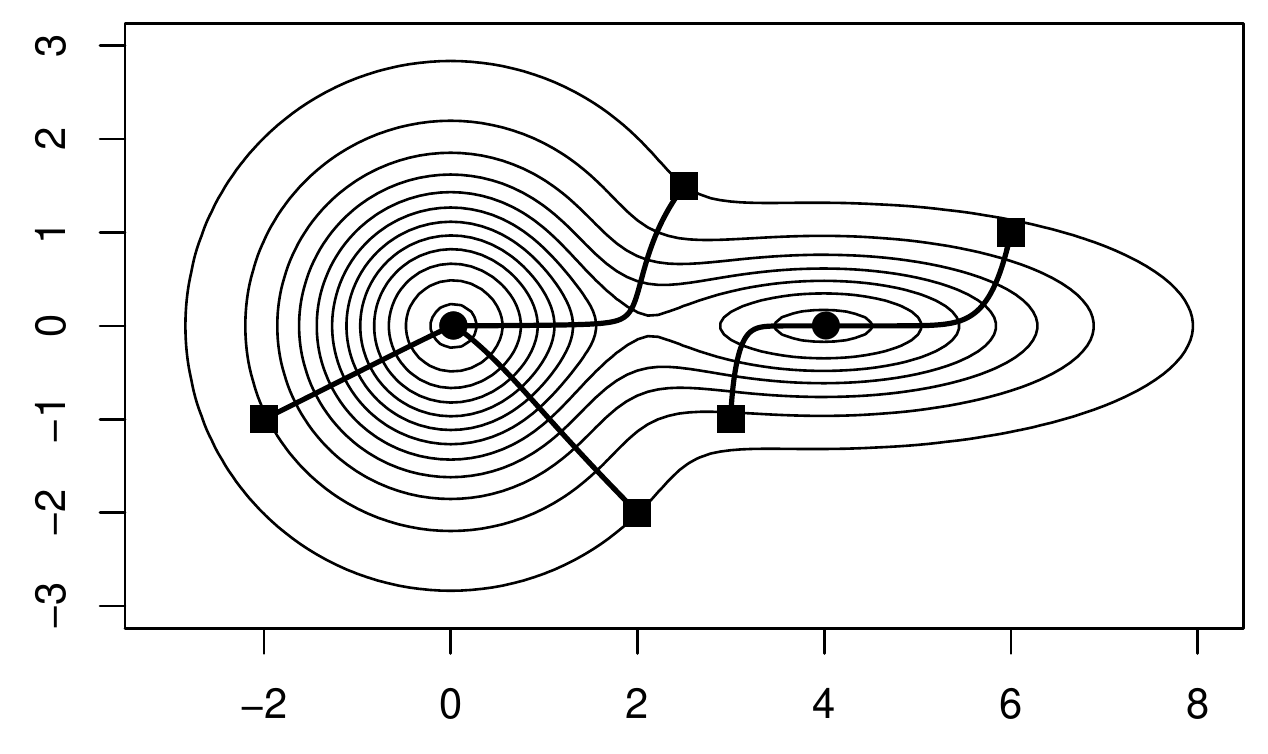}
\caption{A sample of gradient ascent lines for the density of \figref{level_sets_2d}. The square points are the starting points, while the round points are the end points, which are not only critical points but also local maxima (modes) in this example.} 
\label{fig:gradient_lines_2d}
\end{figure}

This approach to clustering has been rediscovered over the years \citep{cheng2004estimating, li2007nonparametric, roberts1997parametric}.
It has also generated a lot of activity on the methodology side. This was started by Fukunaga and Hosteler themselves as they proposed a method in their original paper which is today known as the {\em blurring MeanShift} algorithm.
Interestingly, this method is not strictly plug-in, and in fact behaves quite differently.  Plug-in methods came later, when \citet{cheng1995} proposed what is now known as the {\em MeanShift} algorithm.  
Since then, other methods been proposed, including {\em MedoidShift} \cite{sheikh2007mode}, {\em QuickShift} \cite{vedaldi2008quick}, {\em MedianShift} \cite{shapira2009mode}, and more \cite{carreira2008generalised, carreira2006fast, chacon2019mixture}. The maximum slope approach of \citet{koontz1976graph} is also in that family.
\citet{carreira2015clustering} provides a fairly recent review of this work.
The MeanShift algorithm, and the twin problem of estimating the gradient lines of a density, are now well-understood \citep{cheng1995, comaniciu2002mean, cheng2004estimating, arias2016estimation, carreira2000mode}. 
The behavior of the blurring MeanShift algorithm is not as well understood, although some results do exist \citep{cheng1995, carreira2008generalised, carreira2006fast}.
In this gradient flow approach to clustering, the density modes represent clusters, and this may have also contributed to motivate research work on the estimation of density modes \citep{hartigan1985dip, silverman1981using, minnotte1997nonparametric, dumbgen2008multiscale, burman2009multivariate, genovese2016non}, some of the methods being based on the gradient flow \cite{tsybakov1990recursive, NIPS2017_f457c545}. (Note that modes have been recognized as important features of a density for much longer, at least since \citet{pearson1895}.)

\subsection{Contribution}
The cluster tree proposed by \citet{hartigan1975clustering} and the gradient flow proposed by \citet{fukunaga1975} are clearly related by the central role that modes play in both of them: in the cluster tree, the modes correspond to the leaves; in the gradient flow, the modes correspond to the end points. 
And, indeed, recent survey papers in the area by \citet{menardi2016review, chacon2020modal} discuss them together under the umbrella name of {\em modal clustering}.

In a recent paper \cite{arias2021level}, we go a step further and highlight even stronger ties. We do so by noting that the gradient flow is in a sense compatible with the cluster tree, in that a gradient line does not cross from one cluster to another at the same level and thus respects the partial ordering given by the cluster tree; and by showing how the gradient flow provides a way to move up and down the cluster tree, in the sense that between critical levels, the gradient flow yields a homeomorphism between a cluster and any of its descendants.  

In the present paper, we draw more concrete, and arguably even more closer, ties, which we believe establish these two approaches as being essentially equivalent. By necessity, we work under standard regularity assumptions on the density that guarantee that the gradient flow is well-defined and behaves appropriately. And we do so by suggesting a couple of ways --- which we believe to be very natural --- to obtain a partition from the cluster tree, and then prove that this partition coincides with the partition given by the gradient flow.

The remainder of the paper is organized as follows.
\secref{setting} provides the framework, introduces some concepts and the related notation.
\secref{cluster tree} is the heart of the paper and is where we describe the two algorithms for obtain a partition from the cluster tree (\algref{1} and \algref{2}), and where we state our main results (\thmref{alg1} and \thmref{alg2}).
\secref{discussion} is a discussion section.
The technical proofs are gathered in \secref{proofs}.

\section{Setting and preliminaries}
\label{sec:setting}
Since we discuss approaches to clustering that are both defined based on the sampling density, we discuss and compare them in that context: we have full access to the underlying density, denoted $f$ throughout. All the features that we mention in the paper, in particular (upper) level sets and the cluster tree, gradient lines and the gradient flow, and modes, are always understood in reference to that density.

The density $f$ is with respect to the Lebesgue measure on $\bbR^d$ and is assumed to satisfy the following conditions:
\begin{itemize}
\item {\em Zero at infinity.}
$f$ converges to zero at infinity, meaning, $f(x) \to 0$ as $\|x\| \to \infty$.
\item {\em Twice differentiable.} 
$f$ is twice continuously differentiable everywhere with bounded zeroth, first, and second derivatives.
\item 
{\em Non-degenerate critical points.} 
The Hessian is non-singular at every critical point of $f$.
\end{itemize}
The first condition is equivalent to $f$ having bounded positive (upper) level sets --- see below.
A function that satisfies the last condition is sometimes referred to as a {\em Morse function} \citep{milnor1963morse}.
Similar conditions are standard in the literature cited in the \hyperref[sec:introduction]{Introduction}. 
We discuss possible extensions in \secref{extensions}.

\begin{remark}
The reader will notice that the dimension of the observation space does not play an important role. Of course, in practice, the related methodology has to contend with a standard curse of dimensionality.
\end{remark}

\subsection{Level sets and cluster tree}
For a positive real number $t > 0$, the $t$-level set of $f$ is given by
\begin{equation}
\label{level}
\level_t := \{x : f(x) = t\}.
\end{equation}
while the $t$-upper level set of $f$ is given by
\begin{equation}
\label{up}
\up_t := \{x : f(x) \ge t\}.
\end{equation}
Throughout, whether specified or not, we will only consider levels that are in $(0, \max f)$. Note that, because $f$ converges to zero at infinity and is continuous, its (upper) level sets are compact.

In the level set view of clustering, any connected component of any upper level set is considered a (potential) cluster. The cluster tree, as we consider it here, is then the partial ordering that results from the fact that two clusters are either disjoint or one of them includes the other. (The reader is invited to verify this statement, which comes from the fact that $\up_t \subset \up_s$ when $s \le t$, as is obvious from the definition in \eqref{up}.)

We say that $\cC$ is a {\em cluster} if it is a connected component of an upper level set. We say it is a {\em leaf cluster} if the cluster tree does not branch out past $\cC$, or said differently, if all the descendants of $\cC$ have at most one child.
The last descendant of a leaf cluster is a singleton defined by a mode, and we will use that mode to represent the corresponding lineage.
For a point $x$ and $t \le f(x)$, let $\cC_t(x)$ denote the connected component of $\up_t$ that contains $x$.

\subsection{Gradient lines and gradient flow}
\citet{fukunaga1975} suggest to ``assign each [point] to the nearest mode along the direction of the gradient". Formally, the gradient ascent line starting at a point $x$ is the curve given by the image of $\gamma_x$, the parameterized curve defined by the the following ordinary differential equation (ODE)
\begin{equation} \label{gradient_flow}
\gamma_x(0) = x; \quad
\dot\gamma_x(t) = \nabla f(\gamma_x(t)).
\end{equation} 
Under our conditions on $f$, $\nabla f$ is Lipschitz, and this is enough for standard theory for ODEs \citep[Sec 17.1]{hirsch2012differential} to justify this definition. This, and the fact that this is a gradient flow \citep[Sec 9.3]{hirsch2012differential}, gives the following.

\begin{lemma}
\label{lem:gradient flow}
For any $x$, the function $\gamma_x$ defined in \eqref{gradient_flow} is well-defined on $[0,\infty)$, with $\gamma_x(t)$ converging to a critical point of $f$ as $t \to \infty$.
\end{lemma}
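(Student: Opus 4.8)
The plan is to obtain global existence from the boundedness of $\nabla f$ and standard ODE theory, and to obtain convergence from an $\omega$-limit set argument in which the non-degeneracy (Morse) condition plays the decisive role.

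For existence: since $f$ has bounded second derivatives, $\nabla f$ is Lipschitz, so \eqref{gradient_flow} has a unique maximal solution on some interval $[0,T)$. If $f(x) = 0$, then, $f$ being continuous and non-negative, $x$ is a global minimizer, hence $\nabla f(x) = 0$ and $\gamma_x$ is the constant curve at $x$, for which the lemma is trivial; so assume $f(x) > 0$. Differentiating along the flow gives $\tfrac{d}{dt}\, f(\gamma_x(t)) = \|\nabla f(\gamma_x(t))\|^2 \ge 0$, so $t \mapsto f(\gamma_x(t))$ is non-decreasing on $[0,T)$; in particular $\gamma_x(t) \in \up_{f(x)}$ for all such $t$, and $\up_{f(x)}$ is compact. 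A maximal solution confined to a compact set must be global (\citep[Sec 17.1]{hirsch2012differential}), so $T = \infty$. The same monotonicity shows $f(\gamma_x(t))$ increases to some limit $\ell \le \max f$ as $t \to \infty$.

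For convergence, let $\omega(x)$ denote the $\omega$-limit set of the trajectory $\{\gamma_x(t) : t \ge 0\}$. Being the $\omega$-limit set of a bounded forward orbit, $\omega(x)$ is non-empty, compact, connected, and invariant under the flow. Continuity of $f$ together with $f(\gamma_x(t)) \uparrow \ell$ forces $f \equiv \ell$ on $\omega(x)$; then invariance makes $s \mapsto f(\gamma_p(s))$ constant for every $p \in \omega(x)$, so $\|\nabla f(\gamma_p(s))\|^2 \equiv 0$, and in particular $\nabla f(p) = 0$. Hence $\omega(x)$ consists entirely of critical points of $f$. This is where the non-degeneracy condition enters: applying the inverse function theorem to $\nabla f$ at a critical point (where the Hessian is non-singular) shows that the critical points of $f$ are isolated, so $\omega(x)$ is a connected set every point of which is isolated in it --- that is, a single point $p^\ast$. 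Therefore $\gamma_x(t) \to p^\ast$, a critical point, as $t \to \infty$.

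The crux is the convergence step. Without the non-degeneracy assumption a bounded gradient trajectory may fail to converge to a single point (it can limit onto a continuum of critical points), so the Morse condition is really what is doing the work there; the rest is bookkeeping around the monotonicity of $f$ along the flow and the standard properties of $\omega$-limit sets, both of which I would want to state carefully.
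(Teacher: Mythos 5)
Your proof is correct and is precisely the standard argument the paper invokes by citation: global existence from the Lipschitz gradient together with trapping in the compact upper level set (Hirsch et al., Sec.~17.1), and convergence via the $\omega$-limit set of a gradient flow combined with the Morse condition forcing isolated critical points (Sec.~9.3). The paper gives no further detail, so your write-up simply fills in the same route.
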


Checking that $s \mapsto f(\gamma_x(s))$ is non-decreasing is simply due to 
\begin{equation}
\label{density_increase}
\frac{\d }{\d s} f(\gamma_x(s)) = \|\nabla f(\gamma_x(s))\|^2 \ge 0.
\end{equation}
The {\em basin of attraction} of a point $x_0$ is defined as $\{x: \gamma_x(\infty) = x_0\}$. Note that this set is empty unless $x_0$ is a critical point (i.e., $\nabla f(x_0) = 0$).

In the gradient line view of clustering, we call a cluster any basin of attraction of a mode.
It turns out that, if $f$ is a Morse function \citep{milnor1963morse}, then all these basins of attraction, sometimes called stable manifolds, provide a partition of the support up to a set of zero measure.
\begin{lemma}
\label{lem:basins}
Under the assumed regularity conditions, the basins of attraction of the local maxima, by themselves, cover the population, except for a set of zero measure.
\end{lemma}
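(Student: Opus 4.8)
The plan is to leverage Lemma~\ref{lem:gradient flow}: since $\gamma_x(t)$ converges to a critical point of $f$ for \emph{every} $x \in \bbR^d$, the whole space $\bbR^d$ is the disjoint union of the basins of attraction of the critical points of $f$ (the basin of a non-critical point being empty). A local maximum is precisely a critical point at which the Hessian is negative definite, so it suffices to prove two things: (i) $f$ has at most countably many critical points; and (ii) the basin of attraction of any critical point that is \emph{not} a local maximum has Lebesgue measure zero. Granting these, the union of the basins of the non-maxima is a countable union of null sets, hence null, and its complement --- which by Lemma~\ref{lem:gradient flow} is exactly the union of the basins of attraction of the local maxima --- covers $\bbR^d$, and a fortiori the support of $f$, up to a null set.

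For (i), note that $\nabla f$ is a $C^1$ map because $f \in C^2$, and that its differential at a critical point $p$ is the Hessian $\nabla^2 f(p)$, which is non-singular by assumption. By the inverse function theorem, $\nabla f$ is injective on a neighborhood of $p$, so $p$ is the only critical point in that neighborhood; thus the critical set is discrete (and closed, being $(\nabla f)^{-1}(\{0\})$), and a discrete subset of $\bbR^d$ is countable by separability.

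For (ii), let $p$ be a critical point that is not a local maximum. Non-degeneracy of $\nabla^2 f(p)$ together with $p$ not being a local maximum forces $\nabla^2 f(p)$ to have at least one positive eigenvalue, so the number $k$ of its negative eigenvalues satisfies $k \le d-1$. Hence $p$ is a hyperbolic equilibrium of the $C^1$ vector field $\nabla f$, and the stable manifold theorem yields a $C^1$ local stable manifold $W^s_{\mathrm{loc}}(p)$ of dimension $k$, namely the points in some neighborhood of $p$ whose forward flow stays in that neighborhood and converges to $p$. Writing $\phi_t(x) := \gamma_x(t)$ for the flow of \eqref{gradient_flow} --- which is defined for all $t \in \bbR$ since $\nabla f$ is bounded, and for which each $\phi_t$ is a diffeomorphism --- every orbit converging to $p$ eventually enters $W^s_{\mathrm{loc}}(p)$ and remains there, so the basin of attraction of $p$ equals $\bigcup_{n \ge 1} \phi_{-n}\big(W^s_{\mathrm{loc}}(p)\big)$. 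This is a countable union of $C^1$ diffeomorphic images of a manifold of dimension $k < d$, hence a countable union of Lebesgue-null sets, hence null. This establishes (ii), and with (i) completes the argument.

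I expect the delicate point to be the second half of (ii): the stable manifold theorem is a statement about a \emph{local} stable manifold, so one must argue carefully that the \emph{global} basin of attraction is recovered from it by flowing backward in time, and that this preserves the null-set property --- which relies on the flow being complete (guaranteed by $\nabla f$ bounded) and on each time-$t$ map being a diffeomorphism. It is also worth being explicit that the assumption ``$f$ twice continuously differentiable'' is exactly what makes $\nabla f$ a $C^1$ vector field, the regularity needed to apply both the inverse function theorem in (i) and the stable manifold theorem in (ii); standard references for the latter are in \citep{hirsch2012differential, milnor1963morse}.
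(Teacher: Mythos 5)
Your proposal is correct and follows essentially the same route as the paper: decompose the space into basins of attraction via Lemma~\ref{lem:gradient flow}, observe that non-degeneracy of the Hessian makes the critical set discrete, and discard the basins of the non-maximal critical points because they are (countable unions of diffeomorphic images of) submanifolds of dimension at most $d-1$, hence Lebesgue-null. The only difference is that where the paper directly cites the global stable manifold theorem from Morse theory, you rederive the global measure-zero statement from the local stable manifold theorem by flowing backward in time --- a standard and valid elaboration, not a different argument.
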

Indeed, by \lemref{gradient flow}, the basins of attraction partition the entire population. In addition, the set of critical points is discrete \citep[Cor~3.3]{banyaga2013lectures}, the basin of attraction of each critical point that is not a local maximum is a (differentiable) submanifold of co-dimension at least one\footnote{ The requirement in that theorem that the function be compactly supported is clearly not essential.} \cite[Th~4.2]{banyaga2013lectures}, and therefore has zero Lebesgue measure.
For more background on Morse functions and their use in statistics, see the recent articles of \citet{chacon2015population} and \citet{chen2017statistical}.

In their paper, \citet{fukunaga1975} propose to discretize the following variant of the gradient ascent flow 
\begin{equation} \label{gradient_flow_ms}
\xi_x(0) = x; \quad
\dot\xi_x(t) = \frac{\nabla f(\xi_x(t))}{f(\xi_x(t))}.
\end{equation}
Note that this is the gradient flow given by $\log f$, while the flow defined in \eqref{gradient_flow} is the gradient flow given by $f$ itself. The rationale given in \cite{fukunaga1975} is that the latter helps move points in low-density regions upward more quickly, but note that, regardless, the gradient lines defined by these two flows coincide --- only the speed at which a line is traveled changes. 
Later in the paper, we use another variant: the gradient line parameterized by the level, which takes the form
\begin{equation} \label{gradient_flow_norm}
\zeta_x(0) = x; \quad
\dot\zeta_x(t) = 
\frac{\nabla f(\zeta_x(t))}{\|\nabla f(\zeta_x(t))\|^2}.
\end{equation}
Indeed, $\dot\zeta_x(t) \propto \nabla f(\zeta_x(t))$, so that $\zeta_x$ traces the same gradient line as $\gamma_x$ defined in \eqref{gradient_flow}, and elementary derivations show that $f(\zeta_x(t)) = t- f(x)$ for all applicable $t$, knowing that here the flow is only defined for $t \in [0, f(x_*)-f(x))$, where $x_*$ denotes the critical point to which $x$ is attracted. 
In \lemref{converge mode}, we show that $\zeta_x(t)$ converges to $x_*$ as $t \nearrow f(x_*)-f(x)$, as expected.

\begin{remark}
The difference between the MeanShift algorithm and the blurring MeanShift algorithm does not arise at the population level, but at the sample level, and has to do, roughly, with whether the data points remain fixed or are updated with each iteration of a Euler discretization of \eqref{gradient_flow_ms}.
\end{remark}

\subsection{Metric projection and reach}

The concepts of metric projection and reach, the latter introduced by \citet{federer1959curvature}, will play an important role. For a point $x$ and $\delta > 0$, $\ball(x,\delta)$ is the open ball centered at $x$ of radius $\delta$.
For $\cA \subset \bbR^d$ and $\delta > 0$, define its $\delta$-neighborhood as 
\begin{equation}
\ball(\cA, \delta) 
:= \bigcup_{a \in \cA} \ball(a, \delta)
= \{x : \dist(x, \cA) < \delta\}, 
\end{equation}
where $\dist(x, \cA) := \inf\{\|x-a\| : a \in \cA\}$. 
The projection (aka metric projection) of a point $x$ onto a closed set $\cA$ is the subset points in $\cA$ that are closest to $x$. 
That subset is nonempty if $\cA$ is non-empty.
We say that $x$ has a unique projection onto $\cA$ if its projection is a singleton. 
The reach of $\cA \subset \bbR^d$, denoted $\reach(\cA)$, is defined as the infimum over $\delta>0$ such that every point in $\ball(\cA, \delta)$ has a unique projection onto $\cA$. 
Thus the projection on a set $\cA$ is well-defined on $\ball(\cA, \reach(\cA))$.

\section{Cluster tree: A successive projection algorithm}
\label{sec:cluster tree}

The cluster tree was presented by \citet{hartigan1975clustering} as a partial order structure on the clusters defined by the level sets. To this day, there is no clear proposal for a partition of the entire population based on the cluster tree. 
(When we say `entire population', we mean the support of the density understood as being up to a set of measure zero.)
We present below what we see as a natural definition. The core idea is intuitive enough: each point in the population is moved upward the cluster tree to a leaf mode. The implementation of this idea is not as straightforward, although it remains natural: discretize the levels and project on successively higher and higher level sets. It turns out that, as the level discretization becomes finer and finer, this algorithm is well-defined for a larger and larger proportion of the population, covering almost all the population, eventually. 
And this behavior does not depend on the particular discretization.
Moreover, the limiting sequence obtained in this fashion converges in a proper sense to the gradient ascent line with origin the starting point, implying, therefore, that clustering according to the cluster tree --- if understood as doing the above --- coincides with clustering according to the gradient flow.
And this is our central message, announced in the \hyperref[sec:introduction]{Introduction}.

\subsection{Moving up the cluster tree: Algorithm~1}
\label{sec:alg1}

We define a way --- which we believe to be very natural --- of moving up the cluster tree, starting at an arbitrary point in the population and ending at a leaf cluster. This process will result in a partitioning of the entire population. 
In this first version, we discretize the levels. Let $\eta$ denote the size of the discretization step. 

\begin{algorithm}
\label{alg:1}
Given any point $x$ in the support of $f$, define the sequence of levels: set $t_0 = f(x)$, and for $k \ge 1$, let $t_k = t_0 + k \eta$. 
Then build the following sequence of points: set $q_0 = x$, and for $k \ge 1$, let $q_k$ denote a metric projection of $q_{k-1}$ onto $\level_{t_k}$; if $q_k \notin \cC_{t_{k-1}}(q_{k-1})$ or if $\level_{t_k} = \emptyset$, stop and return $\argmax_{\,\cC_{t_{k-1}}(q_{k-1})} f$. 
\end{algorithm}

Note that the algorithm must end in a finite number of steps since the level increases by $\eta$ are every step.
The condition that the new projection point remains inside the cluster associated with the current point is there to ensure that the sequence respects the partial ordering that is the cluster tree, and in particular prevents long jumps across the cluster tree.

We also remark that other options include stopping when a projection is not unique; stopping when reaching a leaf cluster; and pursuing all possible paths when a projection is not unique and returning all the leaf clusters reached in that a way. It turns out that these options are essentially equivalent for almost all the points in the population under the regularity assumed of the density.

\begin{theorem}
\label{thm:alg1}
Suppose that $x$ is in the basin of attraction of a mode $x_*$. 
Then, for $\eta$ small enough, \algref{1} given $x$ as starting point returns the mode $x_*$ 
Furthermore, as $\eta \to 0$, the polygonal line connecting the sequence $(q_k)$ computed by the algorithm converges to the gradient ascent line originating at $x$.
\end{theorem}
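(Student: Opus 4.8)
The plan is to show that each step of Algorithm~1 approximates a single step of the Euler discretization of the level-parameterized gradient flow $\zeta_x$ from \eqref{gradient_flow_norm}, and then invoke standard convergence of Euler's method. The key observation is that the metric projection of a point $q$ onto a nearby level set $\level_{t}$ moves $q$ approximately in the direction $\nabla f(q)/\|\nabla f(q)\|$ by an amount $\approx (t - f(q))/\|\nabla f(q)\|$, which is exactly $\eta \cdot \nabla f(q)/\|\nabla f(q)\|^2$ when $t - f(q) = \eta$. So the polygonal line $(q_k)$ is, up to second-order errors in $\eta$, the Euler polygon for the ODE $\dot\zeta = \nabla f(\zeta)/\|\nabla f(\zeta)\|^2$ with step size $\eta$ in the level variable.

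\medskip\noindent
\textbf{Step 1: Localize away from critical points.} Fix $x$ in the basin of attraction of the mode $x_*$, and assume $x$ is not itself a critical point (the critical-point case is trivial or excluded by genericity). By \lemref{gradient flow} and \lemref{converge mode}, the gradient line from $x$ — parameterized by level via $\zeta_x$ — is a compact arc $\Gamma$ joining $x$ to $x_*$, and $\nabla f$ vanishes on $\Gamma$ only at the endpoint $x_*$. Choose a level $\tau$ slightly below $f(x_*)$ and split the analysis: on the portion of $\Gamma$ with levels in $[f(x), \tau]$, we have $\|\nabla f\| \ge c > 0$, the level sets $\level_t$ are smooth compact hypersurfaces (no critical values in this range, by the Morse assumption and discreteness of critical points from \cite[Cor 3.3]{banyaga2013lectures}), and they have reach bounded below by some $\rho > 0$ uniformly in $t$ over this range (using compactness and $C^2$-ness of $f$). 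On the final portion near $x_*$, with levels in $[\tau, f(x_*))$, we use that $x_*$ is a non-degenerate maximum: the level sets are approximately ellipsoids shrinking to $x_*$, and any sequence trapped in the cluster $\cC_\tau(\cdot)$ near $x_*$ that keeps climbing must converge to $x_*$ (there is only one mode in that small cluster). This handles the ``return value'' part of the theorem.

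\medskip\noindent
\textbf{Step 2: One step of the algorithm $\approx$ one Euler step.} In the region where $\|\nabla f\| \ge c$ and $\reach(\level_{t}) \ge \rho$, I would show: if $q$ has $f(q) = s$ with $s \in [f(x), \tau]$ and $\eta$ is small enough that the displacement stays within the reach, then the metric projection $q'$ of $q$ onto $\level_{s+\eta}$ satisfies
\begin{equation*}
q' = q + \frac{\eta}{\|\nabla f(q)\|^2}\nabla f(q) + O(\eta^2),
\end{equation*}
where the $O(\eta^2)$ is uniform over this region. This is a direct computation: the projection direction is normal to $\level_{s+\eta}$ at $q'$, hence (to leading order) parallel to $\nabla f(q)$; a Taylor expansion of $f$ along that normal segment gives the step length, and the second-order term is controlled by $\|\nabla^2 f\|_\infty$ and the lower bounds on $\|\nabla f\|$ and reach. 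One also needs that the cluster-membership check $q' \in \cC_s(q)$ does not trigger a premature stop: since $q'$ lies within $O(\eta)$ of $q$ and both lie near the connected arc $\Gamma$ whose neighborhood is inside a single component of each relevant upper level set (connectedness is stable under small perturbations because there are no critical values in the range), for $\eta$ small the check passes until we reach the terminal cluster.

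\medskip\noindent
\textbf{Step 3: Assemble and pass to the limit.} Comparing $(q_k)$ to the exact solution $\zeta_x$, the one-step error is $O(\eta^2)$ and the flow map is Lipschitz on the compact region (since $\nabla f / \|\nabla f\|^2$ is $C^1$ there), so the standard Euler global-error estimate \cite[Sec 17.1]{hirsch2012differential} gives $\sup_k \|q_k - \zeta_x(t_k)\| = O(\eta)$ over the levels in $[f(x), \tau]$, and the polygonal line converges uniformly (as a parameterized curve, reparameterizing by arc length or level) to $\Gamma$. Taking $\tau \nearrow f(x_*)$ and using Step~1's trapping argument near the mode extends this to the full arc and shows the algorithm terminates returning $x_*$. \textbf{The main obstacle} I anticipate is Step~2's uniformity near the mode $x_*$: there $\|\nabla f\| \to 0$, the naive $O(\eta^2 / \|\nabla f\|^2)$ error bound blows up, and the reach of $\level_t$ degenerates as $t \to f(x_*)$. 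The fix is to not run the Euler comparison all the way to $x_*$ but to stop at level $\tau$ and argue separately, using the quadratic (Morse-lemma) structure at $x_*$, that once the algorithm's iterate enters a sufficiently small cluster around $x_*$ it is forced to converge to $x_*$ — essentially because that small cluster contains $x_*$ as its unique mode and $f(q_k)$ increases to $f(x_*)$ by construction, while the cluster-membership constraint keeps the iterates in a shrinking nested family of neighborhoods of $x_*$.
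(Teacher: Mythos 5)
Your proposal is correct and follows essentially the same route as the paper's proof: compare the projection iterates to the level-parameterized flow $\zeta_x$ via the one-step expansion $P_{t+\eta}(q)=q+\eta F(q)+O(\eta^2)$ (the paper's \lemref{P}), accumulate the error by a Gronwall-type recursion in a tube where $\|\nabla f\|$ is bounded below, and handle the degenerate neighborhood of $x_*$ separately by a trapping argument in a small leaf cluster (the paper's \lemref{cluster balls} and the cluster-membership constraint). The only differences are presentational — the paper runs the recursion explicitly rather than citing Euler-method convergence, and measures closeness in Hausdorff distance — so no substantive gap remains.
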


The first part of the theorem is established by showing that the sequence that the algorithm computes enters a leaf cluster containing $x_*$, and once this is established, the sequence must remain there by construction. This is done by comparing the sequence with the following sequence sampled from the gradient line: $z_0 = x$, and for $k \ge 1$, $z_k = \zeta_x(t_k - t_0)$.
In the second part, convergence is with respect to the Hausdorff metric for subsets in $\bbR^d$ --- see \eqref{d_H}.

By \lemref{basins}, the result applies to almost all points in the support of $f$. The theorem thus shows that the way of partitioning the population according to the cluster tree provided by \algref{1} in the infinitesimal limit $\eta\to0$ --- which again we find rather natural --- is equivalent to partitioning the population according to the gradient flow.

\subsection{Moving up the cluster tree: Algorithm~2}
\label{sec:alg2}

We look at another natural way of moving up the cluster tree. In this second variant, we control the spatial step size as opposed to the level step size in the previous algorithm. 
Let the spatial step size be $\eps > 0$.

\begin{algorithm}
\label{alg:2}
Given a point $x$ in the population, successively project onto the highest level set within distance $\eps$: set $t_0 = f(x)$ and $q_0 = x$, and for $k \ge 1$, let $t_k = \max\{f(y) : \|y - q_{k-1}\| \le \eps\}$, and let $q_k$ denote a projection of $q_{k-1}$ onto $\level_{t_k}$; if $q_k = q_{k-1}$, stop and return that point.
\end{algorithm}

Unlike the previous algorithm, it is not obvious that \algref{2} will end in a finite number of steps.
We note that, as before, other options are possible, but again, they would be equivalent for almost all points in the population. 

\begin{theorem}
\label{thm:alg2}
Suppose that $x$ is in the basin of attraction of a mode $x_*$. 
Then, for $\eps$ small enough, \algref{2} given $x$ as starting point returns the mode $x_*$. 
Furthermore, as $\eps \to 0$, the polygonal line connecting the sequence $(q_k)$ computed by the algorithm converges to the gradient ascent line originating at $x$.
\end{theorem}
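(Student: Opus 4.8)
The plan is to mirror the proof of \thmref{alg1}, but since \algref{2} controls the \emph{spatial} step rather than the level step, the natural comparison curve is the arc-length parameterization of the gradient line through $x$: let $\sigma_x$ solve $\dot\sigma_x(s) = \nabla f(\sigma_x(s))/\|\nabla f(\sigma_x(s))\|$ with $\sigma_x(0)=x$. This traces the same curve as $\gamma_x$ and as $\zeta_x$ from \eqref{gradient_flow_norm}; and because $x_*$ is a non-degenerate maximum, a short computation near $x_*$ shows that $\sigma_x$ reaches $x_*$ in finite arc length $L<\infty$, with $\sigma_x(s)\to x_*$ as $s\nearrow L$. Write $\Gamma := \gamma_x([0,\infty))\cup\{x_*\}$ for the (compact) gradient-line image, $u(\cdot):=\nabla f(\cdot)/\|\nabla f(\cdot)\|$, and, for $\rho>0$, $\mathcal{T}_\rho := \ball(\Gamma,\rho)\setminus\ball(x_*,\rho/2)$.

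First I would analyze a single step of \algref{2} for $q:=q_{k-1}\in\mathcal{T}_\rho$ and $\eps$ small. Choosing $\rho$ small enough that $x_*$ is the only critical point of $f$ in $\ball(\Gamma,\rho)$, we have $\|\nabla f\|\ge c>0$ on $\mathcal{T}_\rho$, and (shrinking $\rho$) every level set meeting $\mathcal{T}_\rho$ is there a $C^1$ hypersurface whose reach is bounded below by some $r_0>0$, uniformly in the level. A Lagrange-multiplier argument gives that $y^\star:=\argmax_{\ball(q,\eps)}f$ lies on the sphere $\|y^\star-q\|=\eps$ with $y^\star-q\propto\nabla f(y^\star)$, and a second-order Taylor expansion yields $y^\star = q+\eps\,u(q)+O(\eps^2)$, hence $t_k=f(y^\star)=f(q)+\eps\|\nabla f(q)\|+O(\eps^2)$, with all constants uniform over $\mathcal{T}_\rho$. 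Since $t_k-f(q)=O(\eps)$ while $\reach(\level_{t_k}\cap\mathcal{T}_\rho)\ge r_0$, the projection of $q$ onto $\level_{t_k}$ is unique for $\eps$ small and, by the same expansion, equals $q+\eps\,u(q)+O(\eps^2)$. So every step obeys $q_k=q_{k-1}+\eps\,u(q_{k-1})+R_k$ with $\|R_k\|\le C\eps^2$ uniformly; in particular $\|q_k-q_{k-1}\|=\eps+O(\eps^2)>0$, so the algorithm cannot stop while the current point lies in $\mathcal{T}_\rho$.

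Next I would run a shadowing argument. The recursion above exhibits $(q_k)$ as a forward-Euler scheme with local error $O(\eps^2)$ for $\dot\sigma=u(\sigma)$, and $u$ is Lipschitz on $\mathcal{T}_\rho$; a standard Gr\"onwall/induction estimate then gives $\|q_k-\sigma_x(k\eps)\|\le C'e^{C''L}\eps$ for all $k$ with $k\eps\le L-\delta$, provided the iterates have stayed in $\ball(\Gamma,\rho)$ — which, using this same bound, one verifies inductively for $\eps$ small. Since $\sigma_x(s)\to x_*$, after $O(L/\eps)$ steps the iterates enter $\ball(x_*,\rho/2)$. In a small neighborhood of $x_*$, the negative-definite non-degenerate Hessian forces $\|q_k-x_*\|$ to contract at each step, so the iterates converge to $x_*$; once $\|q_{k-1}-x_*\|\le\eps$, the ball $\ball(q_{k-1},\eps)$ contains $x_*$, which is then the strict maximizer of $f$ on it, so $t_k=f(x_*)$, the relevant piece of $\level_{t_k}$ is $\{x_*\}$, and $q_k=x_*$; at the following step $t_{k+1}=f(x_*)$ and $q_{k+1}=x_*=q_k$, so the algorithm halts and returns $x_*$ (this local part is in the spirit of \lemref{converge mode}, and it also settles the finite-termination question). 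For the second assertion, the polygonal line $P_\eps$ through $(q_k)$ lies within $O(\eps)$ in Hausdorff distance of the polygonal line through $(\sigma_x(k\eps))$ for $k\eps\le L-\delta$, which lies within the modulus of continuity of $\sigma_x$ of $\Gamma$; the leftover pieces of $P_\eps$ and of $\Gamma$ sit in $\ball(x_*,\rho)$. Letting $\eps\to0$ and then $\delta,\rho\to0$ gives $d_H(P_\eps,\Gamma)\to0$; see \eqref{d_H}.

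I expect the crux to be the one-step estimate: obtaining the expansions of $y^\star$ and of the projection onto $\level_{t_k}$ with error $O(\eps^2)$ \emph{uniformly}, which hinges on a uniform positive lower bound for the reach of the pieces of the level sets $\level_{t_k}$ lying in $\mathcal{T}_\rho$ as $t_k$ sweeps a range that may contain critical levels — a level set can have reach zero at a distant saddle, so the bound must be localized to the component near the trajectory — together with gluing this global estimate to the separate local analysis near $x_*$, where $u$ is no longer Lipschitz and the arc length to the mode is finite.
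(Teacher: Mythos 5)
Your plan follows the same broad strategy as the paper's proof --- a one-step expansion of the projection, a Gr\"onwall-type recursion comparing the iterates to a parameterization of the gradient line inside a tube where $\|\nabla f\|$ is bounded below, and a separate local analysis near $x_*$ --- but with two substantive differences. First, you compare against the arc-length parameterization and treat the iteration as a forward Euler scheme for $\dot\sigma = \nabla f(\sigma)/\|\nabla f(\sigma)\|$, whereas the paper keeps the level parameterization $\zeta_x$ of \eqref{gradient_flow_norm} and absorbs the spatial step into variable level increments $\eta_k = t_k - t_{k-1}$, shown to satisfy $(\nu/2)\eps \le \eta_k \le \kappa_1\eps$ inside the tube; the two are essentially interchangeable, though yours needs the extra (true, but to be proved) fact that the gradient line has finite arc length up to $x_*$. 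Your identification of the crux --- a uniform, localized lower bound on the reach of the relevant piece of $\level_{t_k}$ --- is on target and is exactly what \lemref{federer4.11}, \lemref{proj} and \lemref{P} supply; note also that any metric projection of $q_{k-1}$ onto $\level_{t_k}$ is automatically at distance at most $\eps$, since the maximizer over the ball lies on $\level_{t_k}$, so the projection is localized to the tube for free.

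The genuine gap is the endgame near $x_*$. You assert that ``the negative-definite non-degenerate Hessian forces $\|q_k-x_*\|$ to contract at each step,'' but this is not justified and does not follow from your one-step expansion, whose error constants involve powers of $1/\|\nabla f\|$ and therefore blow up as the iterates approach $x_*$; moreover the natural quantitative attempts (lower-bounding the per-step level gain by $\eps\|\nabla f(q_{k-1})\| - \tfrac12\kappa_2\eps^2$, or sandwiching the level sets between concentric ellipsoids) lose factors of $\lambda_1/\lambda_0$ and do not yield a contraction in the regime where $\|q_{k-1}-x_*\|$ is between $\eps$ and a constant multiple of $\eps$. The paper sidesteps this with a soft compactness argument: once the iterates are trapped in the leaf cluster, if $t_\infty := \sup_k f(q_k) < t_*$, then the portion of the cluster with $\s \le f \le t_\infty$ is compact and free of critical points, so $\|\nabla f\|$ is bounded below there and every step must gain a fixed positive amount of level, contradicting the definition of $t_\infty$; hence $f(q_k)\to t_*$, \lemref{cluster balls} then forces $q_k \to x_*$, and your (correct) observation that the algorithm jumps to $x_*$ and halts once $\|q_{k}-x_*\|\le\eps$ completes the termination. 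You should replace the contraction claim with an argument of this type.
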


The first part of the theorem is established by showing that the sequence that the algorithm computes converges to the mode. 
We note that such a convergence, if it happens, must happen in a finite number of steps since the moment $x_*$ is within distance $\eps$ of $q_k$, we have $q_{k+1} = x_*$ and the process stop.
The technical arguments are otherwise similar to those underlying \thmref{alg1}.

As before, by \lemref{basins}, the result above applies to almost all points in the population, thus establishing that partition given by \algref{2} in the infinitesimal limit $\eta\to0$ --- another natural way of dividing the support according to the cluster tree --- is equivalent to partitioning the population according to the gradient flow.

\section{Discussion}
\label{sec:discussion}
\subsection{Comparison with Euler's methods}
The forward and the backward Euler methods are two well-known numerical methods for solving ODEs. \citet{fukunaga1975} suggested to use the forward Euler method  to approximate the gradient flow of \eqref{gradient_flow_ms}: for a fixed small $\eps>0$, starting from a given point $x_0=x$, successively compute
\begin{equation}
x_{i+1} = x_i + \eps \frac{\nabla f(x_i)}{f(x_i)},\quad i=0,1,2, \dots,
\end{equation}
until convergence, where the gradient of $\log f$ at the current location is explicitly used to generate the next point in the sequence. 
As remarked in \secref{intro gradient flow}, the method that they proposed --- now known as the blurring MeanShift algorithm --- is not a plug in implementation of this. Such a method came later from \citet{cheng1995} --- a method now known as the MeanShift algorithm.
A more detailed discussion of the connection between the MeanShift algorithm and the forward Euler method is given in \citep{arias2016estimation}.

In contrast to the forward Euler method, the backward Euler method is an implicit approach: for a fixed small $\eps>0$, starting from a given point $x_0=x$, the backward Euler method computes
\begin{equation}
\label{backward_Euler}
x_{i+1} = x_i + \eps \nabla f(x_{i+1}),\quad i=0,1, 2, \dots,
\end{equation}
until convergence. Below we argue that our \algref{1} and \algref{2} can be viewed as two variants of the backward Euler method. The backward Euler method is known to be stable and the generated sequence has a monotonic property, which is also shared by our two algorithms.

\algref{1}: recall that before the algorithm stops, $q_k$ is a metric projection of $q_{k-1}$ onto $\cL_{t_k}$, where $t_k = t_0 + k\eta$. When $\eta$ is small enough, $q_{k-1} - q_k$ is normal to $\cL_{t_k}$ at $q_k$, so that we can write
\begin{equation}
\label{Alg1_equation}
q_k = q_{k-1} + \eps_k\nabla f(q_k),
\end{equation}
where, this time, $\eps_k$ depends on $k$ and is determined by the property that it is smallest such as to guarantee that $q_k\in \cL_{t_k}$. The dynamical systems \eqref{backward_Euler} and \eqref{Alg1_equation} are otherwise the same. It is in this sense that we view \algref{1} as being a variant of the backward Euler method.

\algref{2}: similar to \algref{1}, $q_k$ is a projection of $q_{k-1}$ onto $\level_{t_k}$ but using a different definition of $t_k = \max\{f(y) : \|y - q_{k-1}\| \le \eps\}$. We can, of course, use the expression in \eqref{Alg1_equation} again to interpret \algref{2} as a variant of the backward Euler method. There is also a different perspective: In \algref{2}, the metric projection of $q_{k-1}$ to $\level_{t_k}$ and the maximization of $f$ over $\partial\ball(q_{k-1}, \eps)$ are in fact the same thing except for the very last step. (This is not completely obvious, but not hard to anticipate, and we omit further details.) That is, the projection point $q_k$, unless it is the final output, is the solution to the following maximization problem with a constraint: 
\begin{align}
\label{Lagrange}
\text{maximize } f(y), \text{ subject to } \|y - q_{k-1}\|^2 = \eps^2.
\end{align}
With a Lagrange multiplier $\lambda$, the corresponding Lagrange function of the above problem is 
\begin{equation}
L(y,\lambda) := f(y) - \lambda (\|y - q_{k-1}\|^2 - \eps^2).
\end{equation}
Taking the gradient of $L$ with respect to $y$ and equating it to 0, we obtain
\begin{equation}
\label{Lagrange_gradient}
\nabla f(y) = 2\lambda (y-q_{k-1}).
\end{equation}
Since we know $q_k$ is the solution, we can write
\begin{equation}
\label{Alg2_equation}
q_k = q_{k-1} + \frac{1}{2\lambda} \nabla f(q_k ),
\end{equation}
where $\lambda$ has to be solved using \eqref{Lagrange_gradient} and the constraint in \eqref{Lagrange}. 
We have thus argued that, compared with the original backward Euler method in \eqref{backward_Euler}, \eqref{Alg2_equation} can also be considered as a variant with a varying step size, which here needs to be determined to guarantee that $\|q_k - q_{k-1}\| = \eps$ before the very last step.

\subsection{Milder assumptions on the density}
\label{sec:extensions}
The assumptions that we made of the density are very standard in this literature, and they are mostly driven by the necessity to have gradient lines be well-defined and well-behaved, and to have Morse theory apply. 
But if this can be guaranteed in other ways, then it seems possible to extend the results to other densities.

A case in point --- one that is relatively easy to address --- is the assumption that the modes are non-degenerate. It's not hard to extend the first part of \thmref{alg1} and first part of \thmref{alg2} --- which are the main parts since they imply the equivalence that is the thesis of this paper --- to a setting where some modes are degenerate. We do assume that the modes are isolated, however, and this is now a separate assumption. (Non-degenerate modes are automatically isolated.) 
Indeed, take \algref{1}, and notice that the first part of \thmref{alg1} is stated for fixed $\eta$. Then the idea is very simple: modify the density within the very last level set containing the mode (or the second-to-last if the last level corresponds to the mode's level) in such a way that the resulting function has a non-degenerate mode at the same location and no other mode within the region of modification. Then, since the algorithm builds the sequence without having to access the values of the density inside that region, the sequence it builds is identical up to the second-to-last step, but also the very last step since it also corresponds to jumping directly to the mode.
Formalizing this is rather straightforward, but tedious, and we do not provide further details.

Another similar extension is to situations in which the density has a flat top. This is a situation in which a level set has non-empty interior, in which case each connected component of such a level set that has non-empty interior is considered a flat top and is for an end leaf in the cluster tree. The algorithms that we have considered continue to work.
From the gradient flow perspective, points whose gradient ascent lines end at the same flat top are clustered together. 
And it is not hard to see that the equivalence remains the same in that the first part of \thmref{alg1} and first part of \thmref{alg2} continue to apply.
This can be argued in the same way, by modifying the density a little bit within each flat top to make it of Morse-type.
Again, making this rigorous is not hard to do, but laborious.

Where the results do not apply, and in fact the equivalence because rather questionable, is when the density has discontinuities. In that case, clustering by gradient flow seems less appropriate (even if the density is piecewise smooth), as the discontinuity imply a saliency of a larger magnitude that is better captured by a level set approach to clustering.

\subsection{Bound on the Hausdorff distance}
\label{sec:Hausdorff bound}
We designed \algref{1} and \algref{2} as natural ways to obtain a partition from the cluster tree, and the analysis we provide for them in \thmref{alg1} and \thmref{alg2} establishes that both are equivalent to the partition defined by the gradient flow when the step size converges to zero --- which really is also natural given the context. As this was our main goal, this qualitative analysis is enough for our purposes.

However, it turns out that our proof arguments can be easily augmented to yield a convergence rate for the polygonal line generated by any one of these two algorithms and the corresponding gradient line. A relatively small elaboration of these arguments yields that {\em the Hausdorff distance between these two lines is $O(\sqrt{\eta})$ for \algref{1} and $O(\eps)$ for \algref{2}.} 

We provide more formal arguments establishing these rates in \secref{proof1 Hausdorff bound} and \secref{proof2 Hausdorff bound}.
We do not know if these rates are sharp, but this is very tangential to our main purpose here.

\subsection{Uniform consistency of \algref{1} and \algref{2}}
\label{sec:uniform}
\thmref{alg1} and \thmref{alg2} only give pointwise (i.e., for a fixed $x$ as the starting point) convergence results for the two algorithms. 
It turns out that this convergence behavior is also uniform in the following sense. {\em For \algref{1}, there is a measurable set $\Omega_\eta$ with probability at least $1-g(\eta)$ for some $g(\eta) \to 0$ as $\eta \to 0$ such that \algref{1} applied to any $x \in \Omega_\eta$ returns the associated mode, meaning, $\lim_{t\to\infty}\gamma_x(t)$. And an analogous result applies for \algref{2}.}

We give more formal arguments in \secref{proof1 uniform} and \secref{proof2 uniform}. 

A rate of convergence of the order of $g(\eta)=O(\sqrt{\eta})$ can also be obtained if one adopts the framework of \citet{chen2017statistical}, and assumes the same conditions of the density, including the assumption (D) there that requires that ``the gradient flow to move away from the boundaries of clusters", where a ``cluster" refers to the basin of attraction of a mode. Other conditions include that $f$ is a Morse function with bounded support and is thrice differentiable with bounded third derivatives. Details are omitted.

\subsection{Methodology inspired by \algref{1} and \algref{2}}
\label{sec:methods}
\algref{1} and \algref{2} were proposed as ways to obtain a partition of the population which, based on successive projections onto higher and higher level sets, may merit to be considered `natural'. And we showed that both ways lead to the partition given by the gradient flow. This equivalence between level set or cluster tree clustering and gradient lines or gradient flow is exactly what we were aiming for.

At the same time, these algorithms can be easily turned into methods by simply applying them on an estimate of the density instead of the density itself when, in the usual framework where only a sample is available, the latter is not available. This plug-in approach yields the following two clustering methods.

\begin{method}
\label{method:1}
Given a sample, $x_1, \dots, x_n \in \bbR^d$, estimate the density to obtain $\hat f$. 
Given some $\eta>0$, apply \algref{1} to each data point with $\hat f$ in place of $f$: let $c_i$ denote what the algorithm returns when applied to $x_i$.
Group the data points according to the distinct elements in $\{c_1, \dots, c_n\}$.
\end{method}

\methodref{1} can be related to the {\em QuickShift} algorithm proposed by \citet{vedaldi2008quick}. 
QuickShift works as follows: based on a sample and an estimate of the density, starting at an arbitrary point, the algorithm iteratively moves to the closest data point whose estimated density is strictly larger than the current value, as long as this data point is within a given distance. 
\citet{NIPS2017_f457c545} and \citet{jiang2018quickshift++} examined the theoretical properties of this method and a variant.
In \methodref{1}, the points on a trajectory are forced to be on certain level sets. This guarantees that the level increases by $\eta$ with every step except possibly for the very last step. 
Note that, in our context, this was motivated from a need to discretize the process of moving up the cluster tree and not for methodological reasons.

\begin{method}
\label{method:2}
Same as \methodref{1} but with \algref{2} (provided with $\eps>0$) in place of \algref{1}.
\end{method}

\methodref{2} can be related to the {\em MaxShift} algorithm that we are proposing and in the process of studying (work in progress at the moment of writing). 
MaxShift is a kind of dual to QuickShift, and works as follows: based on a sample and an estimate of the density, starting at an arbitrary point, the algorithm iteratively moves to the data point whose estimated density is largest, as long as this data point is within a given distance. 
In \methodref{2}, the process is, at least in appearance, a bit more careful in that it has the point move to the closest point at the highest level achieved within a certain range. 
Note that, again, this design choice is motivated not by methodological considerations, but by the main goal of this paper, which was to propose natural ways of moving up the cluster tree --- and then showing that these are in a sense equivalent to follow the gradient lines upward.

\medskip
In any case, we are not proposing these methods as competitors of these or other MeanShift variants, as their implementation appears to be cumbersome and computationally intensive because of having to estimate very many (as $\eta$ or $\eps$ are chosen small) level sets and having to project onto them.
The estimation of the density can be done, for example, by kernel density estimation with bandwidth chosen by leave-one-out cross-validation. Even after having chosen the bandwidth automatically, there remains the choice of $\eta$ or $\eps$, which is analogous to the choice of step size in a forward Euler implementation of the gradient flow --- the MeanShift method of \citet{cheng1995}. This choice is nontrivial, as is often the case for the choice of tuning parameter(s) in a clustering method.

All that being said, these methods can be shown to be consistent. 
This can be developed in three steps in a typical way as, for example, detailed in the proof of \citep[Th~2]{arias2016estimation}. 
Below we give a brief sketch for these three steps with the intention of convincing the reader that a formal proof is not at all far beyond. 
\begin{itemize}
\item {\em Step 1:} By estimating $f$ by kernel density estimation with an appropriate kernel, for example, we obtain $\hat f$ that satisfies all the same assumptions we have required for $f$. Therefore, \thmref{alg1} and \thmref{alg2} apply to $\hat f$, that is, the polygon lines connecting the sequences generated in \methodref{1} and \methodref{2} are close approximations to the gradient flow lines induced by $\hat f$. 
\item {\em Step 2:} When the sample size is large enough, $\hat f$ and $f$, and their derivatives up to the second order, are close to each other, which further implies by some classical stability result for ODEs \citep[Sec 17.5]{hirsch2012differential}, that their induced gradient flow lines are consistent. 
\item {\em Step 3:} Combining the results in Step 1 and Step 2, we can then immediately say that the polygon lines connecting the sequences in \methodref{1} and \methodref{2} are consistent estimators of the gradient flow line of $f$, if their starting points are the same, and the gradient flow line ends at a mode.
\end{itemize}
In fact, the rates discussed in \secref{Hausdorff bound} in the population setting can be shown --- again following standard arguments --- to transfer to the sample setting.

\section{Technical details}
\label{sec:proofs}

Recall the assumptions placed on the density $f$ in \secref{setting}. In particular, it is differentiable and its gradient is bounded and is Lipschitz. 
We let $\kappa_1$ denote the sup norm of the gradient and let $\kappa_2$ denote its Lipschitz constant. Then $f$ is $\kappa_1$-Lipschitz, meaning
\begin{align}
\label{kappa1}
|f(y) - f(x)| \le \kappa_1 \|y-x\|, \quad \forall x, y,
\end{align}
and $\nabla f$ is $\kappa_2$-Lipschitz, meaning
\begin{align}
\label{kappa2}
\|\nabla f(y) - \nabla f(x)\| \le \kappa_2 \|y-x\|, \quad \forall x, y,
\end{align}
and the following Taylor expansion holds
\begin{align}
\label{taylor2}
\big|f(y) - f(x) - \nabla f(x)^\top (y-x)\big|
\le \tfrac12 \kappa_2 \|y-x\|^2, \quad \forall x, y.
\end{align}
We will denote $N(x) := \nabla f(x)/\|\nabla f(x)\|$, which gives the inward-pointing unit normal of $\level_t = \{f = t\}$ at $x$ whenever $\nabla f(x) \ne 0$.

Throughout, we let 
\begin{align}
\label{F}
F(x) := \frac{\nabla f(x)}{\|\nabla f(x)\|^2},
\end{align}
which is the function that defines the ODE given in \eqref{gradient_flow_norm}.

When we speak of distance between sets we mean the Hausdorff distance, which for $\cA, \cB \subset \bbR^d$ is defined as
\begin{align} \label{d_H}
d_H(\cA, \cB) 
:= \max\big\{d_H(\cA \mid \cB), d_H(\cB \mid \cA)\big\}, &&
d_H(\cA \mid \cB)
:= \sup_{a\in \cA} \inf_{b \in \cB} \|a-b\|.
\end{align}

\subsection{Preliminaries}

%
Following \cite{federer1959curvature}, for a set $\cA$ and $x \in \cA$, $\reach(\cA, x)$ denotes the supremum over $r > 0$ such that the metric projection onto $\cA$ is well-defined in the whole of $\ball(x, r)$. By definition, $\reach(\cA)$ is the infimum of $\reach(\cA, x)$ over $x \in \cA$. 

The following is an immediate consequence of \citep[Lem 4.8(2)]{federer1959curvature}.

\begin{lem} \label{lem:federer4.8(2)}
Take a set $\cA$ and $x \in \cA$ such that $r := \reach(\cA, x) > 0$. Then $\cA$ admits a tangent space at $x$.
Also, the metric projection onto $\cA$, denoted $P$, is well-defined on $\ball(x, r)$, and for any $y \in \ball(x,r)$, $P(y) = x$ and $x-y$ is orthogonal to the tangent space of $\cA$ at $x$. 
\end{lem}
  
The following is a quantitative version of \citep[Lem 4.11]{federer1959curvature}, distilled from its own proof.

\begin{lem} \label{lem:federer4.11}
For any level $t>0$ and any $x \in \level_t$, $\reach(\level_t, x) \ge (1/4\kappa_2) \|\nabla f(x)\|$. 
\end{lem}




\begin{lemma}
\label{lem:dist}
Take $x$ such that $\nabla f(x) \ne 0$ and let $t := f(x)$. Then, if $\eta > 0$ is small enough that $\eta \le \|\nabla f(x)\|^2/2\kappa_2$, it holds that $\dist(x, \level_{t+\eta}) \le 2 \eta/\|\nabla f(x)\|$.
\end{lemma}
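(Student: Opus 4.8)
We want to show that if $\nabla f(x) \neq 0$ and $\eta$ is small relative to $\|\nabla f(x)\|^2/\kappa_2$, then the level set $\level_{t+\eta}$ comes within distance $2\eta/\|\nabla f(x)\|$ of $x$. The natural idea is to move from $x$ in the direction of steepest ascent, namely along the unit normal $N(x) = \nabla f(x)/\|\nabla f(x)\|$, and show that after traveling a distance of roughly $\eta/\|\nabla f(x)\|$ the density has increased by at least $\eta$; then by the intermediate value theorem there is a point on that short segment lying on $\level_{t+\eta}$, which gives the bound on the distance.

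Concretely, the plan is: set $y_s := x + s\, N(x)$ for $s \ge 0$ and estimate $f(y_s)$ from below using the second-order Taylor bound \eqref{taylor2}. Since $\nabla f(x)^\top N(x) = \|\nabla f(x)\|$, we get
\[
f(y_s) \ge f(x) + s\|\nabla f(x)\| - \tfrac12 \kappa_2 s^2 .
\]
Taking $s^* = 2\eta/\|\nabla f(x)\|$, the linear term contributes $2\eta$, while the quadratic term is $\tfrac12\kappa_2 (s^*)^2 = 2\kappa_2 \eta^2/\|\nabla f(x)\|^2 \le \eta$ precisely under the hypothesis $\eta \le \|\nabla f(x)\|^2/(2\kappa_2)$. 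Hence $f(y_{s^*}) \ge f(x) + 2\eta - \eta = t + \eta$. Since $f(y_0) = f(x) = t < t + \eta \le f(y_{s^*})$ and $s \mapsto f(y_s)$ is continuous, there exists $s_0 \in (0, s^*]$ with $f(y_{s_0}) = t + \eta$, i.e. $y_{s_0} \in \level_{t+\eta}$. Therefore $\dist(x, \level_{t+\eta}) \le \|y_{s_0} - x\| = s_0 \le s^* = 2\eta/\|\nabla f(x)\|$, as claimed.

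The argument is essentially a one-line Taylor estimate, so there is no serious obstacle; the only point requiring a little care is confirming that $\level_{t+\eta}$ is genuinely nonempty and reachable along the segment — but this is handled automatically by the intermediate value theorem once the endpoint inequality $f(y_{s^*}) \ge t+\eta$ is in hand, and we do not even need to invoke the global behavior of $f$ (such as the zero-at-infinity condition) since we only move a bounded distance. One could alternatively phrase the whole thing via the level-parameterized flow $\zeta_x$ of \eqref{gradient_flow_norm}, but the direct segment argument is cleaner and avoids worrying about the domain of definition of $\zeta_x$. The constant $2$ is not optimized and is chosen to make the threshold on $\eta$ clean; a sharper analysis would give $(1+o(1))\eta/\|\nabla f(x)\|$, but this is not needed.
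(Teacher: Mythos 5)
Your proof is correct and follows essentially the same route as the paper: a second-order Taylor lower bound for $f$ along the segment $x + s N(x)$, followed by the intermediate value theorem to locate a point of $\level_{t+\eta}$ within the required distance. The only cosmetic difference is that you evaluate directly at $s^* = 2\eta/\|\nabla f(x)\|$, whereas the paper first reaches level $t + \|\nabla f(x)\|^2/2\kappa_2$ at $u_0 = \|\nabla f(x)\|/\kappa_2$ and then back-solves for the bound on the crossing point; both yield the same conclusion.
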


\begin{proof}
By \eqref{taylor2}, we have, for $u > 0$, 
\begin{align}
\label{taylor_N}
f(x+ uN(x)) \ge f(x) + u \|\nabla f(x)\| - \tfrac12 \kappa_2 u^2.
\end{align}
Applying this with $u_0 = \|\nabla f(x)\|/\kappa_2$, and letting $x_0 := f(x+ u_0 N(x))$,  gives $f(x_0) \ge t + \|\nabla f(x)\|^2/2\kappa_2$.
Because $f$ is continuous on the half-line $\{x+ \lambda N(x) : \lambda > 0\}$, if $\eta \le \|\nabla f(x)\|^2/2\kappa_2$, then necessarily, there is $u_1 \le u_0$ such that $f(x+u_1 N(x)) = t+\eta$, which then forces $\dist(x, \level_{t+\eta}) \le \|x - (x+u_1 N(x))\| = u_1$. And because $f(x+u_1 N(x)) \ge t + u_1 \|\nabla f(x)\|/2$, we also have that $u_1 \le 2 \eta/\|\nabla f(x)\|$.
\end{proof}

Let $P_t$ denote the metric projection onto $\level_t$, which is well-defined on $\ball(\level_t, \reach(\level_t))$.
Combining the last two lemmas, we get the following
\begin{lemma}
\label{lem:proj}
Take $x$ such that $\nabla f(x) \ne 0$ and let $t := f(x)$. Then, if $\eta > 0$ is small enough that $\eta \le \|\nabla f(x)\|^2/16\kappa_2$, it holds that $P_{t+\eta}(x)$ is well-defined and satisfies $\|P_{t+\eta}(x) - x\| \le 2 \eta/\|\nabla f(x)\|$.
\end{lemma}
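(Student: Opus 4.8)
The plan is to combine \lemref{dist} with \lemref{federer4.11}. The one subtlety is that I must work with the \emph{localized} reach $\reach(\level_{t+\eta}, y)$ at a point $y$ near $x$, rather than with the global reach $\reach(\level_{t+\eta})$: the latter could be tiny because $\|\nabla f\|$ may be small at far-away points of $\level_{t+\eta}$, whereas near $x$ the gradient is well controlled.

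Write $g := \|\nabla f(x)\| > 0$. First I would observe that the hypothesis $\eta \le g^2/16\kappa_2$ is in particular stronger than the hypothesis $\eta \le g^2/2\kappa_2$ of \lemref{dist}; that lemma therefore applies and yields $\rho := \dist(x, \level_{t+\eta}) \le 2\eta/g$. In particular $\level_{t+\eta}$ is nonempty (this is already shown inside the proof of \lemref{dist}), and, being compact, it contains a point $y$ with $\|x - y\| = \rho$. Next I would bound $\|\nabla f(y)\|$ from below: since $\rho \le 2\eta/g \le g/8\kappa_2$, the Lipschitz bound \eqref{kappa2} gives $\|\nabla f(y) - \nabla f(x)\| \le \kappa_2\rho \le g/8$, hence $\|\nabla f(y)\| \ge 7g/8 > 0$.

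Then \lemref{federer4.11} applied at the point $y$ of $\level_{t+\eta}$ gives $\reach(\level_{t+\eta}, y) \ge \|\nabla f(y)\|/4\kappa_2 \ge 7g/32\kappa_2$, and this strictly exceeds $\rho$ because $\rho \le g/8\kappa_2 = 4g/32\kappa_2$. Hence $x \in \ball(y, \reach(\level_{t+\eta}, y))$, so by the definition of the localized reach --- equivalently, by \lemref{federer4.8(2)} --- the metric projection onto $\level_{t+\eta}$ is well-defined at $x$; since $y$ already realizes the distance from $x$ to $\level_{t+\eta}$, this forces $P_{t+\eta}(x) = y$, and therefore $\|P_{t+\eta}(x) - x\| = \rho \le 2\eta/g$, which is the claim.

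I do not expect a real obstacle: the proof is essentially bookkeeping. The only point genuinely requiring care is the one flagged at the outset --- passing from the global reach of $\level_{t+\eta}$ to its reach at $y$ via the gradient lower bound at $y$ --- after which one just has to keep the numerical constants aligned so that $\rho$ stays comfortably below $\reach(\level_{t+\eta}, y)$; this is exactly what the factor $16$ in the hypothesis is there to provide.
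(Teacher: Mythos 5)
Your proof is correct and follows essentially the same route as the paper's: apply \lemref{dist} to locate a nearest point $y \in \level_{t+\eta}$ within $2\eta/\|\nabla f(x)\|$ of $x$, lower-bound $\|\nabla f(y)\|$ via the Lipschitz bound \eqref{kappa2}, and then invoke the localized reach bound of \lemref{federer4.11} at $y$ to conclude that $\reach(\level_{t+\eta}, y)$ exceeds $\|y-x\|$, forcing $y$ to be the unique projection. The only difference is cosmetic (your constants $7g/8$ and $7g/32\kappa_2$ versus the paper's $g/2$ and $g/8\kappa_2$), and the point you flag as the key subtlety --- using the local rather than global reach --- is exactly the point the paper's proof relies on as well.
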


\begin{proof}
The conditions of \lemref{dist} are met, so if $y \in \cL_{t+\eta}$ is closest to $x$, then $\|y-x\| \le 2 \eta/\|\nabla f(x)\|$. Now, by \eqref{kappa2},
\begin{align}
\|\nabla f(y)\| 
&\ge \|\nabla f(x)\| - \kappa_2 \|y-x\| \\
&\ge \|\nabla f(x)\| - \kappa_2 (2 \eta/\|\nabla f(x)\|) \\
&> \tfrac12 \|\nabla f(x)\|.
\end{align}
Calling in \lemref{federer4.11}, and using the assumed bound on $\eta$, we derive 
\[\reach(\cL_{t+\eta}, y) 
> (1/4\kappa_2) \|\nabla f(y)\| 
\ge (1/8\kappa_2) \|\nabla f(x)\|
\ge 2 \eta/\|\nabla f(x)\|
\ge \|y - x\|,\]
implying, by definition, that $y$ is the unique projection of $x$ onto $\cL_{t+\eta}$.
\end{proof}

Elementary derivations yield the following.
\begin{lemma}
\label{lem:F}
When $f$ is twice differentiable, the function $F$ defined in \eqref{F} is once differentiable at any $x$ where $\nabla f(x) \ne 0$, where its derivative is equal to 
\begin{align}
D F(x) = \frac{H f(x)}{\|\nabla f(x)\|^2} - \frac{2 \nabla f(x) \nabla f(x)^\top H f(x)}{\|\nabla f(x)\|^4}.
\end{align}
In particular, $\|D F(x)\| \le 3 \|H f(x)\|/\|\nabla f(x)\|^2$ at any such $x$.
\end{lemma}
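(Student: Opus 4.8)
The plan is to write $F = h\cdot\nabla f$, where $h(x) := \|\nabla f(x)\|^{-2}$ is a scalar field, and differentiate using the product rule for the product of a scalar field with a vector field. Viewing $\nabla f(x)$ as a column vector, $F(x) = h(x)\,\nabla f(x)$, so
\[
DF(x) = \nabla f(x)\,\nabla h(x)^\top + h(x)\, D(\nabla f)(x) = \nabla f(x)\,\nabla h(x)^\top + \frac{Hf(x)}{\|\nabla f(x)\|^2},
\]
since the Jacobian of $\nabla f$ is the Hessian $Hf$. Differentiability of $F$ at a point $x$ with $\nabla f(x)\neq 0$ is then immediate: $\nabla f$ is differentiable by hypothesis, and $h$ is a smooth function of $\nabla f$ on the open set $\{\nabla f\neq 0\}$, hence differentiable there.

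The one remaining computation is $\nabla h$. Writing $h(x) = \big(\nabla f(x)^\top \nabla f(x)\big)^{-1}$, the chain rule gives $\nabla h(x) = -\|\nabla f(x)\|^{-4}\,\nabla\!\big(\nabla f^\top\nabla f\big)(x)$, and a direct computation—using that $Hf$ is symmetric because $f$ is twice continuously differentiable—yields $\nabla\!\big(\nabla f^\top\nabla f\big)(x) = 2\,Hf(x)\,\nabla f(x)$. Hence $\nabla h(x) = -2\,Hf(x)\,\nabla f(x)/\|\nabla f(x)\|^4$. Substituting this into the display above and using $\big(Hf(x)\,\nabla f(x)\big)^\top = \nabla f(x)^\top Hf(x)$ (again by symmetry of $Hf$) gives exactly the claimed formula for $DF(x)$.

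For the norm bound I would apply the triangle inequality together with submultiplicativity of the operator norm to the two terms. The first term contributes $\|Hf(x)\|/\|\nabla f(x)\|^2$. For the second, the rank-one matrix $\nabla f(x)\,\nabla f(x)^\top$ has operator norm exactly $\|\nabla f(x)\|^2$, so that term is bounded by $2\|Hf(x)\|/\|\nabla f(x)\|^2$; adding the two yields $\|DF(x)\|\le 3\|Hf(x)\|/\|\nabla f(x)\|^2$. There is no substantive obstacle here—the statement is genuinely elementary; the only points demanding a little care are the bookkeeping of row-versus-column conventions (and where the Jacobian of $\nabla f$ sits in the product rule) and the two appeals to symmetry of the Hessian.
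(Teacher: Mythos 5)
Your proof is correct and is precisely the elementary computation the paper invokes without writing out (the paper simply states ``Elementary derivations yield the following''): the product rule applied to $F = \|\nabla f\|^{-2}\,\nabla f$, the chain rule for $\|\nabla f\|^{-2}$, symmetry of the Hessian, and the operator norm $\|\nabla f(x)\,\nabla f(x)^\top\| = \|\nabla f(x)\|^2$ for the final bound. No gaps.
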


%

\begin{lem}
\label{lem:P}
There is a constant $C > 0$ that depends on $f$ such that the following holds.
Consider a level $t > 0$ and $x \in \level_t$ such that $\nabla f(x) \ne 0$.
If $\eta > 0$ is small enough that $\eta \le (1/C) \|\nabla f(x)\|^2$, then $P_{t+\eta}(x)$ is well-defined and satisfies
\begin{equation}
\label{P}
P_{t+\eta}(x) =  x + \eta F(x) \pm \eta^2 \frac{C}{\|\nabla f(x)\|^3}.
\end{equation}
\end{lem}


\begin{proof}
Below, $x$ and $t$ are considered fixed. 
Let $C$ be large enough that $\eta \le \|\nabla f(x)\|^2/16\kappa_2$, so that \lemref{proj} applies to give us that $x_\eta := P_{t+\eta}(x)$ is well-defined and satisfies 
\begin{align}
\label{P_proof1}
\|x_\eta - x\| 
\le 2\eta/\|\nabla f(x)\|
\le (2/C) \|\nabla f(x)\|.
\end{align}

By \lemref{federer4.8(2)}, $x_\eta -x$ is parallel to $\nabla f(x_\eta)$, so we can write 
\begin{align}\label{Peta1}
x_\eta -x = c_\eta \nabla f(x_\eta),
\end{align}
for some scalar $c_\eta > 0$. 
Using a Taylor expansion, we have for some $s_\eta\in(0,1)$,
\begin{align}\label{Peta2}
\eta 
&= f(x_\eta) - f(x) \\
&= [x_\eta - x]^\top \nabla f(s_\eta x_\eta + (1-s_\eta) x) \nonumber\\
& = c_\eta [\nabla f(x_\eta)]^\top \nabla f(s_\eta x_\eta + (1-s_\eta) x).
\end{align}
Extracting the expression of $c_\eta$ from (\ref{Peta2}) and plugging it into (\ref{Peta1}), we get
\begin{align}
\frac{x_\eta - x}\eta = \frac{\nabla f(x_\eta)}{[\nabla f(x_\eta)]^\top \nabla f(s_\eta x_\eta + (1-s_\eta) x)}.
\end{align}
We then develop the denominator using \eqref{kappa2}, to get
\begin{align}
& [\nabla f(x_\eta)]^\top \nabla f(s_\eta x_\eta + (1-s_\eta) x) \\
&= [\nabla f(x_\eta)]^\top \big(\nabla f(x_\eta) \pm \kappa_2 \|x_\eta -x\|\big) \\
&= \|\nabla f(x_\eta)\|^2 \pm  \kappa_2 \|x_\eta -x\| \|\nabla f(x_\eta)\|.
\end{align}
Hence,
\begin{align}
\frac{x_\eta - x}\eta 
&= \frac{\nabla f(x_\eta)}{\|\nabla f(x_\eta)\|^2 \pm  \kappa_2 \|x_\eta -x\| \|\nabla f(x_\eta)\|} \\
&= \frac{F(x_\eta)}{1 \pm  \kappa_2 \|x_\eta -x\|/\|\nabla f(x_\eta)\|}.
\end{align}


By \eqref{kappa2} and \eqref{P_proof1},
\begin{align}
\|\nabla f(x_\eta)\|
&\ge \|\nabla f(x)\| - \kappa_2 \|x_\eta -x\| \\
&\ge (1 - \kappa_2 (2/C)) \|\nabla f(x)\| \\
&\ge \tfrac12 \|\nabla f(x)\|,
\end{align}
for $C$ large enough, which then implies
\begin{align}
\|x_\eta -x\|/\|\nabla f(x_\eta)\|
\le 4/C,
\end{align}
so that, taking $C$ as large as needed, we have
\begin{align}
\label{P_proof2}
\frac{x_\eta - x}\eta = F(x_\eta) \left(1 \pm 2 \kappa_2 \|x_\eta -x\|/\|\nabla f(x_\eta)\|\right).
\end{align}

Continuing,
\begin{align}
F(x_\eta) 
&= F(x) + \int_0^1 D F(s x_\eta + (1-s) x) (x_\eta-x) \d s \\
&= F(x) \pm C_1 \|x_\eta-x\|/\|\nabla f(x)\|^2,
\end{align}
based on \lemref{F}. 
Plugging this into \eqref{P_proof2} above, and also using the fact $\|F(x)\| = 1/\|\nabla f(x)\|$, gives
\begin{align}
x_\eta 
&=  x + \eta F(x) \pm C_2 \eta \|x_\eta - x\|/\|\nabla f(x)\|^2 \\
&=  x + \eta F(x) \pm 2 C_2 \eta^2/\|\nabla f(x)\|^3,
\end{align}
using the first inequality in \eqref{P_proof1}.
\end{proof}

\begin{lemma}
\label{lem:cluster modes}
Take any level $0 < t < \max f$, and any cluster at level $t$, meaning, any connected component of $\up_t$. Then that cluster contains at least one mode. Moreover, take any gradient ascent line that ends at a mode: if it intersects that cluster, then it must remain in that cluster and, therefore, end at one of the modes the cluster contains.
\end{lemma}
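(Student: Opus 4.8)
The plan is to prove the two assertions in sequence. For the first — that any cluster $\cC$ at level $t$ (a connected component of $\up_t$) contains a mode — I would argue by a compactness-plus-gradient-flow argument. Since $f$ converges to zero at infinity, $\up_t$ is compact, so $\cC$ is compact, and $f$ attains its maximum over $\cC$ at some point $x^\star \in \cC$. I would then show $x^\star$ is in fact a mode of $f$ (not merely a constrained maximum on the boundary): if $x^\star \in \partial \cC$, then $f(x^\star) = t$ since $\cC$ is a connected component of a level set's superlevel set, but then points just inside $\cC$ near $x^\star$ have $f$-value $\geq t$ with equality only on $\level_t$, and because $\nabla f(x^\star)$ points into the region $\{f > t\}$ unless $\nabla f(x^\star) = 0$, we could increase $f$ by moving along $N(x^\star)$ while staying in $\cC$ (by \lemref{dist}-type reasoning, the level set $\level_{t+\eta}$ is within $\cC$ for small $\eta$), contradicting maximality. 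Hence $\nabla f(x^\star) = 0$ and $x^\star$ is a critical point; since it is a local maximum of $f$ restricted to an open neighborhood inside $\cC$ and $f$ is Morse, the non-singular Hessian must be negative definite, so $x^\star$ is a genuine mode.

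For the second assertion, suppose a gradient ascent line $\gamma_y$ ends at a mode and intersects $\cC$, say $\gamma_y(s_0) \in \cC$ for some $s_0 \geq 0$. By \eqref{density_increase}, $s \mapsto f(\gamma_y(s))$ is non-decreasing, so for all $s \geq s_0$ we have $f(\gamma_y(s)) \geq f(\gamma_y(s_0)) \geq t$, meaning $\gamma_y(s) \in \up_t$ for all $s \geq s_0$. The set $\{\gamma_y(s) : s \geq s_0\}$ is connected (continuous image of an interval) and meets $\cC$, so it lies entirely within the connected component $\cC$ of $\up_t$. In particular, the endpoint $\lim_{s\to\infty}\gamma_y(s)$, which exists and is a critical point by \lemref{gradient flow} and is a mode by hypothesis, lies in the closure of $\cC$; since $\cC$ is closed (it is a connected component of the closed set $\up_t$ — note closed subsets of $\bbR^d$ have closed connected components), the endpoint is in $\cC$, so it is one of the modes $\cC$ contains. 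This also shows the line "remains in" $\cC$ from the moment it first enters, as claimed.

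The main obstacle is the careful part of step one: ruling out that the maximizer $x^\star$ sits on $\partial\cC$ with $\nabla f(x^\star)\neq 0$. The subtlety is that $\cC$ is not open, so a "constrained maximum on the boundary" is a priori possible; one must use the specific geometry that $\cC$ is a component of $\{f \geq t\}$ and that moving inward along the gradient direction strictly increases $f$ while staying in the same connected component (this last point needs a small argument that the short inward segment does not exit $\cC$, which follows because $f$ stays $> t$ along it for a short distance by the Taylor bound \eqref{taylor2}, hence the segment stays in $\up_t$ and, being connected and attached to $x^\star\in\cC$, stays in $\cC$). Everything else is routine given \lemref{gradient flow}, \lemref{basins}, and the monotonicity \eqref{density_increase}.
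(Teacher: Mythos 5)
Your overall route matches the paper's: compactness gives a maximizer $x^\star$ of $f$ over the cluster $\cC$, one then argues $x^\star$ is a genuine (unconstrained) local maximum, and the second assertion follows from the monotonicity \eqref{density_increase} together with connectedness of the tail of the curve and closedness of $\cC$; your treatment of the second assertion is essentially identical to the paper's and is fine. The gap is in the first part, in precisely the case your case analysis leaves open: $x^\star \in \partial\cC$ with $\nabla f(x^\star) = 0$. Your inward-gradient argument only excludes boundary maximizers with \emph{nonzero} gradient, so after ``hence $\nabla f(x^\star)=0$'' you may still be sitting at a critical point on $\partial\cC$ at level exactly $t$; there is then no ``open neighborhood inside $\cC$'' containing $x^\star$, and the assertion that $x^\star$ is a local maximum of $f$ does not yet follow --- a priori $x^\star$ could be, say, a saddle lying on $\partial\up_t$ with $f(x^\star)=t$.

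The fix is short and in the same spirit as your nonzero-gradient step. If $\nabla f(x^\star)=0$ and $x^\star$ is not a local maximum, then by the Morse assumption $Hf(x^\star)$ is non-singular and has a positive eigenvalue $\lambda$ with unit eigenvector $v$; a Taylor expansion gives $f(x^\star + s v) = t + \tfrac12 \lambda s^2 + o(s^2) > t$ for all small $s \ne 0$, so the short segment $\{x^\star + sv : 0 \le s \le s_0\}$ lies in $\up_t$, is connected and contains $x^\star \in \cC$, hence lies in $\cC$ and contains points with $f$-value strictly above $f(x^\star)=\max_{\cC} f$ --- a contradiction. Otherwise $Hf(x^\star)$ is negative definite and $x^\star$ is a mode; note this argument also disposes of the singleton case without separate treatment. (The paper instead splits on whether $\cC$ is a singleton and argues that otherwise the maximizer must lie in the interior of $\cC$; your version, once patched as above, is just as good.)
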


\begin{proof}
Take any connected component of $\up_t$ and denote it by $\cC_t$. 

Since $\cC_t$ is a compact set, $\cM := \argmax_{x\in\cC_t}f(x)$ is a non-empty subset of $\cC_t$. Moreover, we claim that any point in $\cM$ is a mode. This is obvious if $\cC_t$ is a singleton, therefore, assume this is not the case. Then $\cC_t^\circ$ is a connected component of $\up_t^\circ = \{f > t\}$. And a point in $\cM$ must be in that interior, by definition, since any point on the boundary of $\cC_t$ is at level $t$. Therefore, $\cM$ is the set of maximizers in $\cC_t^\circ$ and from that we deduce that any point in $\cM$ is a local maximum, i.e., a mode.
Hence, we have established that $\cC_t$ contains at least one mode. 

Take any gradient ascent line, say $\gamma_x$ as defined in \eqref{gradient_flow}. Suppose that this line intersects $\cC_t$, so that there is $s \ge 0$ such that $\gamma_x(s) \in \cC_t$. Then for $r \ge s$, $f(\gamma_x(r)) \ge f(\gamma_x(s)) = t$, and therefore $\{\gamma_x(r): r \ge s\} \subset \up_t$. And since this is a continuous piece of curve, it must be entirely included in a connected component of $\up_t$, and that component must be $\cC_t$ because of the existing intersection at $\gamma_x(s)$.
Finally, by assumption, $\gamma_x$ converges to a mode, so that if it remains in $\cC_t$ past a certain point, the mode it converges to must belong to $\cC_t$.
\end{proof}

\begin{lemma}
\label{lem:converge mode}
Let $x_*$ be a mode of $f$ and let $x$ be in the basin of $x_*$ but $x\neq x_*$. Then
\begin{equation}
\zeta_x(t) \longrightarrow x_* \quad \text{as} \quad  t\nearrow\,f(x_*)-f(x).
\end{equation}
\end{lemma}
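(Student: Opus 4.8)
The statement to prove is \lemref{converge mode}: if $x_*$ is a mode and $x \neq x_*$ lies in its basin of attraction, then $\zeta_x(t) \to x_*$ as $t \nearrow f(x_*) - f(x)$. The plan is to transfer what we already know about $\gamma_x$ (from \lemref{gradient flow}: $\gamma_x(s) \to x_*$ as $s \to \infty$) to the reparameterized flow $\zeta_x$, using the fact established earlier in the excerpt that $\zeta_x$ and $\gamma_x$ trace the \emph{same} gradient line, only at different speeds, together with the identity $f(\zeta_x(t)) = f(x) + t$.

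First I would set up the reparameterization rigorously. Since $x$ is in the basin of $x_*$ and $x \neq x_*$, the line $\gamma_x$ is nonconstant; along it $f(\gamma_x(s))$ is strictly increasing (by \eqref{density_increase}, $\nabla f$ does not vanish anywhere on the open orbit, since vanishing gradient would mean we already reached a critical point) and increases from $f(x)$ up to $f(x_*)$ as $s$ goes from $0$ to $\infty$. Hence $s \mapsto f(\gamma_x(s))$ is a strictly increasing continuous bijection from $[0,\infty)$ onto $[f(x), f(x_*))$; call its inverse $s(\cdot)$. Then $\zeta_x(t) := \gamma_x(s(f(x)+t))$ is exactly the level-parameterized flow solving \eqref{gradient_flow_norm}, defined for $t \in [0, f(x_*)-f(x))$, and one checks directly that $f(\zeta_x(t)) = f(x)+t$. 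As $t \nearrow f(x_*)-f(x)$, we have $f(x)+t \nearrow f(x_*)$, and therefore $s(f(x)+t) \to \infty$; consequently $\zeta_x(t) = \gamma_x(s(f(x)+t)) \to \gamma_x(\infty) = x_*$ by \lemref{gradient flow}. That is the whole argument.

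The one point that needs a little care — and is the only real obstacle — is justifying that $s \mapsto f(\gamma_x(s))$ is \emph{strictly} increasing and that its range is exactly $[f(x), f(x_*))$, i.e. that $\gamma_x$ does not get stuck at any intermediate critical point and that $f(\gamma_x(s))$ does not plateau. Strict monotonicity follows because if $\nabla f(\gamma_x(s_0)) = 0$ for some finite $s_0$, then $\gamma_x$ is constant for $s \ge s_0$ (uniqueness for the ODE with Lipschitz right-hand side), so $\gamma_x(\infty) = \gamma_x(s_0)$ would be that intermediate critical point; but $\gamma_x(\infty) = x_*$ is a mode and $\gamma_x(s_0)$ has $f$-value strictly below $f(x_*)$ unless $s_0$ is ``at infinity'', a contradiction — more simply, $x \neq x_*$ forces $\nabla f(\gamma_x(s)) \neq 0$ for all finite $s$. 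Then $\frac{\d}{\d s} f(\gamma_x(s)) = \|\nabla f(\gamma_x(s))\|^2 > 0$ gives strict monotonicity, and continuity of $f$ together with $\gamma_x(s) \to x_*$ gives $\sup_s f(\gamma_x(s)) = f(x_*)$, with the value $f(x_*)$ not attained at any finite $s$ (again since $\nabla f \neq 0$ there). This pins down the domain of $\zeta_x$ as exactly $[0, f(x_*)-f(x))$ and makes the limit statement precise. Everything else is the elementary reparameterization bookkeeping already sketched in the text preceding \eqref{gradient_flow_norm}.
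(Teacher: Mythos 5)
Your proposal is correct and follows essentially the same route as the paper's proof: both reparameterize $\gamma_x$ by the level via the strictly increasing map $s \mapsto f(\gamma_x(s))$, identify the reparameterized curve with $\zeta_x$ through the ODE \eqref{gradient_flow_norm}, and then transfer the limit $\gamma_x(\tau) \to x_*$ as $\tau \to \infty$. Your extra care in justifying that $\nabla f(\gamma_x(s)) \neq 0$ for all finite $s$ (via uniqueness of the ODE solution) fills in a step the paper simply asserts, which is a welcome addition but not a different argument.
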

\begin{proof}
Let $\gamma_x$ be the integral curve induced by the vector field $\nabla f$ originating from $x$, as defined in \eqref{gradient_flow}.
Notice that for any $\tau\in[0,\infty)$,
\begin{align}
\label{tau_def}
f(\gamma_x(\tau)) - f(x) = \int_0^\tau \nabla f(\gamma_x(s) )^\top \dot \gamma_x(s) \d s = \int_0^\tau \| \nabla f(\gamma_x(s) ) \|^2 \d s =: t(\tau).
\end{align}
Since $\| \nabla f(\gamma_x(s) ) \|>0$ for any $s\in[0,\infty)$, $t$ is a strictly increasing function of $\tau$, and it has an inverse, denoted by $\tau(t)$, satisfying $\tau(0)=0$. The definition in \eqref{tau_def} and relation between $t$ and $\tau$ give that 
\begin{equation}
\lim_{\tau\to\infty} t(\tau) = f(x_*) - f(x) \quad \text{and} \quad \lim_{t\nearrow [f(x_*)-f(x)]} \tau(t) = \infty.
\end{equation} 
Notice that $t$ is differentiable as a function of $\tau$, and $\dot t(\tau) = \| \nabla f(\gamma_x(\tau) ) \|^2$. Hence $\tau$ is also differentiable as a function of $t$, and
\begin{equation}
\dot \tau(t) = \| \nabla f(\gamma_x(\tau(t)) ) \|^{-2}.
\end{equation}
This then leads to
\begin{align}
\frac{\partial \gamma_x(\tau(t))}{\partial t} = \dot \gamma_x(\tau(t)) \, \dot \tau(t) = \frac{\nabla f(\gamma_x(\tau(t)))}{\|\nabla f(\gamma_x(\tau(t)))\|^2},
\quad \text{with} \quad
\gamma_x(\tau(0)) = \gamma_x(0) = x,
\end{align}
which implies that $\zeta_x(t) = \gamma_x(\tau(t))$ for all $t\in[0,f(x_*)-f(x))$. Therefore
\begin{equation*}
\lim_{t\nearrow\, f(x_*)-f(x)} \zeta_x(t) = \lim_{t\nearrow\, f(x_*)-f(x)} \gamma_x(\tau(t))= \lim_{\tau\to\infty} \gamma_x(\tau)= x_*.
\qedhere
\end{equation*}
\end{proof}

\begin{lemma}
\label{lem:cluster balls}
Let $x_*$ be a mode of $f$ and let $t_* = f(x_*)$. For $0 < t < t_*$, let $\cC_t$ denote the connected component of $\up_t$ that contains $x_*$. Then there are non-increasing functions $\delta_-$ and $\delta_+$ defined on $(0, t_*)$ such that $0 < \delta_- < \delta_+$ and $\delta_+(t) \to 0$ as $t \to t_*$, and 
\begin{align}
\label{Ct_bounds}
\bar\ball(x_*, \delta_-(t)) \subset \cC_t \subset \ball(x_*, \delta_+(t)), \quad \text{for all } 0 < t < t_*.
\end{align}
In particular, one may take $\delta_-$ and $\delta_+$ such that $\delta_-(t) \asymp \delta_+(t) \asymp \sqrt{t_* - t}$ as $t \nearrow t_*$.
\end{lemma}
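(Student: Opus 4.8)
The plan is to exploit that $x_*$ is a non-degenerate local maximum. Since $x_*$ is a local maximum at which the Hessian is non-singular, $H f(x_*)$ is negative definite, and by continuity of the second derivatives there are constants $0 < a \le b$ and a radius $r_0 > 0$ such that a second-order Taylor expansion at $x_*$ (where $\nabla f(x_*) = 0$) yields the two-sided quadratic bound
\begin{equation*}
t_* - b \|x - x_*\|^2 \;\le\; f(x) \;\le\; t_* - a \|x - x_*\|^2, \qquad x \in \ball(x_*, r_0).
\end{equation*}
Everything will follow from this estimate together with the compactness of the upper level sets and the monotonicity $\cC_s \subset \cC_t$ for $s \ge t$, which is immediate from $\up_s \subset \up_t$.

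For the inner ball, the left inequality shows that whenever $\sqrt{(t_* - t)/b} \le r_0$, the closed ball $\bar\ball(x_*, \sqrt{(t_* - t)/b})$ is contained in $\up_t$; being connected and containing $x_*$, it therefore lies inside $\cC_t$. For smaller $t$ one falls back on monotonicity, so I would take $\delta_-(t) := \min\{r_0, \sqrt{(t_* - t)/b}\}$, which is positive and non-increasing on $(0, t_*)$. For the outer ball, the key step is a \emph{trapping} argument: set $m(r) := \max\{f(x) : \|x - x_*\| = r\}$, so that $m(r) \le t_* - a r^2 < t_*$ for $r \le r_0$; if $t > m(r_0)$, then no point of the sphere $\partial\ball(x_*, r_0)$ lies in $\up_t \supseteq \cC_t$, and since $\cC_t$ is connected with $x_*$ in the interior of $\ball(x_*, r_0)$ it must stay inside $\ball(x_*, r_0)$, whereupon the right inequality gives $\|x - x_*\| \le \sqrt{(t_* - t)/a}$ for every $x \in \cC_t$. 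For the complementary range $0 < t \le m(r_0)$, on which $\cC_t$ is compact hence bounded, one patches the two radius bounds into a single finite, non-increasing $\delta_+$ — for instance by taking a running supremum from the right of the pointwise radius bound and adding $\delta_-(t)$ to make the inclusion into an open ball strict. Note that $\bar\ball(x_*, \delta_-(t)) \subset \ball(x_*, \delta_+(t))$ then forces $0 < \delta_-(t) < \delta_+(t)$ for free, and $\delta_+(t) \to 0$ as $t \nearrow t_*$.

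Finally, the asymptotics are read off directly: for $t$ in a left-neighborhood of $t_*$ one has $\delta_-(t) = \sqrt{(t_* - t)/b}$ and $\delta_+(t) \asymp \sqrt{(t_* - t)/a}$, so both are $\asymp \sqrt{t_* - t}$ as $t \nearrow t_*$. I do not expect a deep obstacle here: the content is the Morse quadratic estimate together with the topological trapping of $\cC_t$ inside the neighborhood on which that estimate is valid; the only mildly delicate point is organizing the two regimes (levels near $t_*$ versus levels bounded away from $t_*$) into functions $\delta_\pm$ that are globally non-increasing while retaining the correct $\sqrt{t_* - t}$ rate at the top of the tree.
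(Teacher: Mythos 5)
Your proof is correct, and its core mechanism --- the two-sided quadratic Morse estimate near a non-degenerate maximum, combined with a trapping argument that confines the connected component $\cC_t$ to the ball on which that estimate holds --- leads to all the claimed conclusions. The outer-inclusion step is essentially the same as in our proof (we define $t_\diamond(\delta)=\sup_{\partial\ball(x_*,\delta)}f$ and argue via a path from a putative escaping point back to $x_*$; you use $m(r)$ and a separation-of-connectedness argument, which is if anything slightly cleaner since it does not implicitly rely on path-connectedness of $\cC_t$). The main structural difference is where the non-degeneracy of $x_*$ enters. We deliberately prove the qualitative inclusions \eqref{Ct_bounds} using only that the mode is \emph{isolated}: $\delta_-$ and $\delta_+$ are defined intrinsically as an inf/sup over the cluster (e.g., $\delta_-(t)$ via the distance from $x_*$ to $\cL_t\cap\cC_t$), their positivity and decay are obtained by purely topological arguments, and the Hessian is invoked only at the very end to upgrade the conclusion to the rate $\delta_\pm(t)\asymp\sqrt{t_*-t}$. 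This separation is exploited in \secref{extensions}, where the same inclusions are needed for degenerate or flat-top modes. Your argument instead uses the quadratic bound from the outset, which makes for a shorter and more self-contained proof of the lemma as stated, at the cost of that extra generality. Two small remarks: your ``running supremum from the right'' is unnecessary, since $\rho(t):=\sup_{x\in\cC_t}\|x-x_*\|$ is already non-increasing by the nesting $\cC_t\subset\cC_s$ for $s\le t$; and your fallback $\delta_-(t)=r_0$ in the low-level regime does need the one-line check (which your bound supplies) that $t\le t_*-br_0^2$ there, so that $\bar\ball(x_*,r_0)\subset\up_t$.
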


\begin{proof}
We first focus on \eqref{Ct_bounds}. Notice that this part of proof only needs the modes to be isolated, but not necessarily non-degenerate, to accompany our comments in \secref{extensions}. For a fixed constant $a>1$, define
\begin{align}
\delta_-(t) = \sup\{\delta\ge0: \ball(x_*,\delta)\subset \cC_t\}\quad \text{and} \quad \delta_+(t) = a\sup_{x\in \cC_t} \|x-x_*\|.
\end{align} 
Due to the fact that $\cC_t \subset \cC_{t^\prime}$ for any $0<t^\prime \le t<t_*$, $\delta_-$ and $\delta_+$ are non-increasing on $(0,t_*)$.  It is also clear that $\delta_- < \delta_+$ and \eqref{Ct_bounds} is satisfied, using the compactness of $\cC_{t}$. 

Next we show that $\delta_-(t)>0$ for all $t\in(0,t_*)$. Let $\cB_t = \cL_t \cap \cC_t$. Denote $\tilde \up_t := \{x : f(x) > t\}$. Then note that $\up_t = \tilde \up_t \cup \cL_t$, and $\cC_t = (\cC_t\cap \tilde \up_t)\cup \cB_t$. It is also clear that $\tilde \up_t \cap \cC_t \subset \cC_t^\circ$. Since $\cC_t$ is a compact set, we have $\partial \cC_t = \cC_t \setminus \cC_t^\circ \subset \cB_t$, that is, $f(x)=t$ for all $x\in\partial \cC_t$. Define $\tilde\delta_-(t) : = \inf_{x\in \cB_t} \|x-x_*\|$. For any $t\in(0,t_*)$, suppose that there exists $x\in \bar\ball(x_*,\tilde\delta_-(t))$ such that $f(x)<t$. By the continuity of $f$, there exists an interior point $\tilde x$ of the line segment $[x,x_*]$ such that $\tilde x\in \partial\cC_t\subset \cB_t.$ This would lead to a contradictory result $\|\tilde x-x_*\| < \tilde\delta_-(t) $. Hence we must have $f(x)\ge t$ for all $x\in \bar\ball(x_*,\tilde\delta_-(t))$, which yields $\delta_-(t) \ge \tilde \delta_-(t)$ by definition. If $ \tilde \delta_-(t)=0$, then $x_*\in \cB_t$ implying that $f(x_*) = t$, which is not true. Therefore we have $0<\tilde \delta_-(t) \le \delta_-(t) .$ 

Then we show that $\delta_+(t) \to 0$ as $t \to t_*$. Since $x_*$ is an isolated mode of $f$, there exists $\delta_0>0$ such that $x_*$ is the only mode in $\bar\ball(x_*,\delta_0)$. For any $\delta\in(0,\delta_0)$, let $t_\diamond(\delta)=\sup_{x\in\partial \ball(x_*,\delta)} f(x)$ and $t_\dagger(\delta) = \frac{1}{2}(t_\diamond(\delta) + t_*)$. Note that $t_\diamond(\delta)<t_\dagger(\delta) < t_*$, and $\cC_{t_\dagger(\delta)}\subset \bar\ball(x_*,\delta)$, with the latter being a result of the following argument: if there exists $x\in\cC_{t_\dagger(\delta)}$ such that $\|x-x_*\|>\delta$, then there exists a path $p[x,x_*]$ connecting $x$ and $x_*$ such that $p[x,x_*]\subset \cC_{t_\dagger(\delta)}$; however, this path $p[x,x_*]$ must intersect $\partial \ball(x_*,\delta)$, resulting in $\sup_{x\in\partial \ball(x_*,\delta)} f(x) \geq t_\dagger(\delta) > t_\diamond(\delta)$, which is not consistent with the definition of $t_\diamond$. Therefore 
\begin{equation}
\label{deltaplug_bound}
\delta_+(t_* ) \le \delta_+(t_\dagger(\delta)) \le a\delta.
\end{equation}
Since $\delta_+$ is non-increasing and non-negative, there exists $\delta_*\ge 0$ such that  $\delta_+(t) \to \delta_*$ as $t \to t_*$. By observing that $\delta$ in \eqref{deltaplug_bound} can be chosen arbitrarily small, we conclude that $\delta_*=0$.

Under the assumption that $x_*$ is non-degenerate and the second derivatives of $f$ are continuous, there exist $\lambda_1\ge \lambda_0>0$ and $\delta_1>0$ such that for all $x\in\bar\ball(x_*,\delta_1)$, all the eigenvalues of $Hf(x)$ are within $[-\lambda_1,-\lambda_0]$. Using a Taylor expansion, we can write for all $x\in\bar\ball(x_*,\delta_1)$,
\begin{equation}
-\frac{1}{2}\lambda_1\|x-x_*\|^2 \le f(x) - f(x_*) \leq -\frac{1}{2}\lambda_0\|x-x_*\|^2.
\end{equation}
It then follows that for any $t\in(t_* - \frac{1}{2}\delta_1\lambda_0,t_*)$, 
\begin{equation}
\label{Ct_inclusion}
\bar\ball\Big(x_*, \sqrt{2\lambda_1^{-1}(t_*-t)}\,\Big) \subset \cC_t \subset \bar\ball\Big(x_*, \sqrt{2\lambda_0^{-1}(t_*-t)}\,\Big),
\end{equation}
which gives the desired result in \eqref{Ct_bounds}.
\end{proof}

%

\subsection{Proof of \thmref{alg1}}
\label{sec:proof1}

Let $\zeta(t) := \zeta_x(t-t_0)$, where $\zeta_x$ is the flow defined in \eqref{gradient_flow_norm}, that is, the gradient ascent line originating with $x$ parameterized by the level. In what follows, we only consider $t$ in $[t_0, t_*]$, where $t_* := f(x_*)$, and note that $f(\zeta(t)) = t$ for any such $t$. For $t$ in that range, we let $\cZ_t := \zeta([t_0,t])$,  which is the gradient line as a subset of the ambient Euclidean space starting at $x$ and ending when reaching level $t$.

For $k$ such that $t_k < t_*$, define $z_k := \zeta(t_k)$, and note that $f(z_k) = t_k$, so that $z_k \in \level_{t_k}$, just like $q_k$.
Of course, as the level grid becomes finer and finer, the sequence $(z_k)$ is closer and closer to the gradient ascent line, and the basic idea is to compare the sequence $(q_k)$ to the sequence $(z_k)$. Note that $z_0 = x = q_0$ --- which is a good start.

\subsubsection{Algorithm returns $x_*$}
\label{sec:proof1 return}
It suffices to show that the sequence reaches a leaf cluster. After that, it has to remain there by construction, which forces it to end at a leaf cluster, since all descendants of a leaf cluster are also leaf clusters, by definition. 
Take $\s \in (0, t_*)$ close enough to $t_*$ that $\cC_* := \cC_\s(x_*)$ is a leaf cluster. That such a level $\s$ exists comes from \lemref{cluster balls} and the fact that the modes are isolated.

Suppose $\eta$ is small enough that $\eta \le \frac12 (t_* - \s)$, and let $t_\# := \frac12 (t_*+\s)$.
Define
$
\nu := \frac12 \min\{\|\nabla f(z)\| : z \in \cZ_{t_\#}\},
$
and note that $\nu > 0$ by the fact that $t \mapsto \|\nabla f(\zeta(t))\|$ is continuous and positive on $[t_0,t_\#]$ because the gradient line traced by $\zeta$ does not contain a critical point other than $x_*$ at its very end (at $t = t_* > t_\#$). 
By an application of \eqref{kappa2}, we have that $\|\nabla f(y)\| \ge \nu$ for all $y$ in the `tube' $\cT := \ball(\cZ_{t_\#}, \delta_{\rm tube})$, where $\delta_{\rm tube} :=  \nu/\kappa_2$.
Let $k_\# := \max\{k : t_k \le t_\#\}$ and note that $t_\# -\eta < t_{k_\#} \le t_\#$ and $(t_\# - t_0)/\eta -1 < k_\# \le (t_\# - t_0)/\eta$. 

We show below that, when $\eta$ is small enough, the sequence $(q_k)$ is uniquely defined up to $k = k_\#$ at which step it is inside $\cC_*$, which is certainly enough to establish that the algorithm returns $x_*$.

\paragraph{The sequence $(z_k)$}
We first note that $\zeta$ is twice differentiable, with $\dot\zeta(t) = F(\zeta(t))$ and $\ddot\zeta(t) = D F(\zeta(t)) \dot\zeta(t) =  D F(\zeta(t)) F(\zeta(t))$. In view of \lemref{F} and the fact that $\|\nabla f(\zeta(t))\| \ge 2\nu$ for $t \in [t_0,t_\#]$, for such a $t$ it holds that $\|\ddot\zeta(t)\| \le (3 \kappa_2/(2 \nu)^2) (1/2\nu)$.
Therefore, a Taylor expansion gives 
\begin{align}
z_k - z_{k-1}
&= \zeta(t_k) - \zeta(t_{k-1}) \\
&= (t_k - t_{k-1}) \dot\zeta(t_{k-1}) \pm C_1 (t_k-t_{k-1})^2 \\
&= \eta F(z_{k-1}) \pm C_1 \eta^2, \label{z diff}
\end{align}
for any $k \le k_\#$.

In addition, the sequence $(z_k)$ does not come `close' to any cluster at $\s$ other than $\cC_*$. Indeed, $\cZ_{t_*}$ does not intersect any of these clusters, for otherwise it would end at one of the corresponding modes and not at $x_*$ --- see \lemref{cluster modes}. 
Therefore, by compactness of the leaf clusters and of $\cZ_{t_*}$, there is $\delta_* > 0$ such that $\cZ_{t_*}$ is at least $\delta_*$ away from any cluster at $\s$ other than $\cC_*$, and this then applies to the sequence $(z_k)$ since $(z_k) \subset \cZ_{t_*}$.

\paragraph{The sequence $(q_k)$}
Suppose for now that, for $k \le k_\#$, $d_{k-1} := \|q_{k-1} - z_{k-1}\| \le \delta_{\rm tube}$ --- so that $\|\nabla f(q_{k-1})\| \ge \nu$ since $z_{k-1} = \zeta(t_{k-1})$ with $t_{k-1} \le t_\#$ --- and consider bounding $d_k = \|q_k - z_k\|$.
Assume $\eta$ is small enough that 
\begin{align}
\label{eta_bound}
\eta \le \nu^2/C_2, \text{ where $C_2$ is the constant of \lemref{P}.}
\end{align} 
The same lemma then tells us that $q_k$ is well-defined and gives
\begin{align}
q_k 
&= q_{k-1} + (t_k - t_{k-1}) F(q_{k-1}) \pm (t_k - t_{k-1})^2 C_2/\|\nabla f(q_{k-1})\|^3 \\
&= q_{k-1} + \eta F(q_{k-1}) \pm C_3 \eta^2. \label{q diff}
\end{align}
The bound \eqref{q diff} combined with \eqref{z diff} gives
\begin{align}
q_k - z_k
&= q_k - q_{k-1} + q_{k-1} - z_{k-1} + z_{k-1} - z_k \\
&= \eta F(q_{k-1}) \pm C_3 \eta^2 + q_{k-1} - z_{k-1} - \eta F(z_{k-1}) \pm C_1 \eta^2.
\end{align}
Applying the triangle inequality, we further derive
\begin{align}
d_k
&\le d_{k-1} + \eta \|F(q_{k-1}) - F(z_{k-1})\| + (C_1+C_3) \eta^2 \\
&\le d_{k-1} + \eta (3\kappa_2/\nu^2) \|q_{k-1} - z_{k-1}\| + C_4 \eta^2 \\
&= (1+ C_5 \eta) d_{k-1} + C_4 \eta^2, \label{d_bound}
\end{align}
where the second inequality is due to the fact that $\|D F(y)\| \le 3\kappa_2/\nu^2$ inside $\cT$ and the segment $[z_{k-1},q_{k-1}]$ is in that set since that set contains $\ball(z_{k-1}, \delta_{\rm tube})$ and $\|z_{k-1} - q_{k-1}\| = d_{k-1} \le \delta_{\rm tube}$, by assumption.

Define the sequence 
\begin{align}
\text{$a_0 = 0$ and $a_k = (1+C_5 \eta) a_{k-1} + C_4 \eta^2$ for $k \ge 1$.} 
\end{align}
As is well-known, 
\begin{align}
\label{a_bound}
a_k 
\le C_4 \eta^2 \frac{\exp[C_5 \eta k] - 1}{C_5 \eta}, \quad \forall k \ge 0.
\end{align}
If we only consider $0 \le k \le k_\#$, it holds that $a_k \le C_6 \eta$, since over that range $\eta k \le \eta k_\# \le t_\# - t_0 \le t_*$. 

We choose $\eta$ small enough as to guarantee that $C_6 \eta \le \delta_{\rm tube}$. This is in addition to the main bound on $\eta$ assumed in \eqref{eta_bound}.
For a recursion, we start at $d_0 = \|q_0 - z_0\| = 0$ and assume that $d_{k-1} \le a_{k-1}$. Then \eqref{d_bound} gives 
\begin{align}
d_k
&\le (1+ C_5 \eta) d_{k-1} + C_4 \eta^2 \\
&\le (1+ C_5 \eta) a_{k-1} + C_4 \eta^2 \\
&= a_k,
\end{align}
and thus the recursion can proceed all the way to $k = k_\#$, thus establishing that
\begin{align}
\label{d_final}
d_k = \|q_k - z_k\| \le C_6 \eta, \quad \forall k = 0, \dots, k_\#.
\end{align}

\paragraph{Conclusion}
We have thus shown that, if $\eta$ is small enough, the sequence $(q_k : k = 0, \dots, k_\#)$ is well-defined and satisfies  \eqref{d_final}.
In particular, because $\|q_{k_\#} - z_{k_\#}\| \le C_6 \eta$, by \eqref{kappa1} we have 
\begin{align}
\label{alg1_proof1}
f(q_{k_\#}) 
\ge f(z_{k_\#}) - C_7 \eta
= t_{k_\#} - C_7 \eta
\ge t_\# - C_8 \eta.
\end{align}
When $\eta$ is so small that $t_\# - C_8 \eta \ge \s$, we thus have that $q_{k_\#} \in \up_{\s}$. 
And when $\eta$ is so small that $\delta_* - C_6 \eta > 0$, the connected component of $\up_{\s}$ that $q_{k_\#}$ belongs to must be $\cC_*$, since $z_{k_\#}$ is at least $\delta_*$ away from any other component of $\up_{\s}$.
Thus, the sequence $(q_k)$ enters $\cC_*$.

\subsubsection{Convergence}
\label{sec:proof1 convergence}
Given $\delta > 0$, we show that when $\eta$ is small enough, the Hausdorff distance between the polygonal line $\cQ := \bigcup_k [q_{k-1}, q_k]$ and $\cZ_{t_*}$ is bounded from above by $\delta$.
 
We showed that, when $\s$ is below but close enough to $t_*$, and when $\eta$ is sufficiently small, we have $q_k \in \cC_* = \cC_\s(x_*)$ for all $k \ge k_\#$. Hence, if $\delta_+$ is as in \lemref{cluster balls}, then any such $q_k$ belongs to $\ball(x_*, \delta_+(\s))$. By convexity of the ball, we must have that past $q_{k_\#}$, $\cQ$ is inside the same ball.
We also saw that $z_{k_\#} \in \cC_*$, and therefore past that point, $\cZ_{t_*}$ must be inside $\cC_\s(x_*)$ (since it's the gradient ascent line we are dealing with), and thus inside that same ball.
Therefore, past step $k_\#$, both the polygonal line and the gradient line are inside a ball of radius $\delta_+(\s)$, and must therefore be within Hausdorff distance $2\delta_+(\s)$. 
We now choose $\s$ close enough to $t_*$ that $2\delta_+(\s) \le \delta$.

Now, take any $k = 1, \dots, k_\#$ and any point $q \in [q_{k-1}, q_k]$.
We have
\begin{align}
\|q-z_{k-1}\|
&\le \|q-q_{k-1}\| + \|q_{k-1} - z_{k-1}\| \\
&\le \|q_k-q_{k-1}\| + d_{k-1},
\end{align}
with, based on \eqref{q diff}, 
\begin{align}
\label{q diff convergence}
\|q_k-q_{k-1}\|
\le \eta \|F(q_{k-1})\| + C_3 \eta^2
\le \eta/\nu + C_3 \eta^2
\le C_9 \eta,
\end{align}
and with \eqref{d_final} giving $d_{k-1} \le C_6 \eta$.
Hence, $\|q-z_{k-1}\| \le C_{10} \eta$. This is true for any point $q$ on $\cQ$ up to step $k_\#$. And because $(z_k) \subset \cZ_{t_*}$, this implies that this part of $\cQ$ is within $C_{10} \eta$ of $\cZ_{t_*}$.

And the same is true the other way around. To be sure, we walk the reader through the same arguments. 
Take any $k = 1, \dots, k_\#$ and any point $z = \zeta(t)$ with $t \in [t_{k-1}, t_k]$.
We have
\begin{align}
\|z-q_{k-1}\|
&\le \|z-z_{k-1}\| + \|z_{k-1} - q_{k-1}\|,
\end{align}
with, exactly as in \eqref{z diff}, 
\begin{align}
\label{z diff convergence}
\|z-z_{k-1}\|
&\le (t-t_{k-1}) \|F(z_{k-1})\| + C_1 (t-t_{k-1})^2 \\
&\le \eta \|F(z_{k-1})\| + C_1 \eta^2 
\le \eta/\nu + C_1 \eta^2
\le C'_9 \eta,
\end{align}
and with \eqref{d_final} giving $\|z_{k-1} - q_{k-1}\| = d_{k-1} \le C_6 \eta$, as before.
Hence, $\|z-q_{k-1}\| \le C'_{10} \eta$. This is true for any point $z$ on $\cZ_{t_*}$ up to $z_{k_\#}$. And because $(q_k) \subset \cQ$, this implies that this part of $\cZ_{t_*}$ is within $C'_{10} \eta$ of $\cQ$.

By choosing $\eta$ even smaller if needed that $C_{10} \eta \le \delta$ and $C'_{10} \eta \le \delta$, we get that $\cQ$ and $\cZ_{t_*}$, in their entirety, are within Hausdorff distance $\delta$.

\subsubsection{Bound on the Hausdorff distance}
\label{sec:proof1 Hausdorff bound}
We assume here that $x_*$ is non-degenerate. In that case, by the assumption that the density is twice continuously differentiable, there is $\delta_0$ such that, for some $\lambda_0 > 0$, $H f$ has all its eigenvalues bounded from above by $- \lambda_0$ inside the ball $B_0 := 
\ball(x_*, \delta_0)$. 
Now, consider $g(t) := \|\nabla f(\zeta(t))\|^2$. It is differentiable, with 
\begin{align}
g'(t) 
&= \frac{2 \nabla f(\zeta(t))^\top Hf(\zeta(t)) \nabla f(\zeta(t))}{\|\nabla f(\zeta(t))\|^2}
\end{align}
so that $g'(t) \le -2\lambda_0$ whenever $t \ge s_0 := \sup\{s: \zeta(s) \notin B_0\}$.
From this we deduce that $g(t) \ge 2\lambda_0 (t_*-t)$, i.e., for $t \in [s_0, t_*]$,
\begin{equation}
\label{gr_norm_bound}
\|\nabla f(\zeta(t))\| \ge \sqrt{2\lambda_0 (t_*-t)}.
\end{equation}

Define $\nu_0 := \frac12 \min\{\|\nabla f(\zeta(s))\| : 0 \le s \le s_0\}$. Note that $\nu_0 > 0$.
In what follows, we take $\s = t_* - 2a_* \eta$, where $a_* > 0$ will be chosen large enough below but fixed.
Note that $t_\# = t_* - a_* \eta$.
For $\eta$ small enough that $\cC_{t_\#} \subset \ball(x_*, \delta_0)$, we also have 
\begin{align}
2 \nu 
&= \min\{\|\nabla f(\zeta(s))\| : 0 \le s \le t_\#\} \\
&= \min\{\|\nabla f(\zeta(s))\| : 0 \le s \le s_0\} \wedge \min\{\|\nabla f(\zeta(s))\| : s_0 \le s \le t_\#\} \\
&\ge 2 \nu_0 \wedge \sqrt{2\lambda_0 (t_*-t_\#)}.
\end{align}
Therefore, for $\eta$ small enough, $\nu \ge  \sqrt{\lambda_0 a_* \eta}$.

Now, tracing back the necessary conditions on $\eta$ for the arguments in \secref{proof1 return} to work, we see that the more stringent requirement, at least when $\nu$ is small enough, is \eqref{eta_bound}. Because of the lower bound on $\nu$ above, this requirement is satisfied when $1 \le \lambda_0 a_*$, in effect, when $a_*$ is large enough. 

Taking $a_*$ as such, we now follow the arguments in \secref{proof1 convergence}. Note that, here, $\delta_+(t) \asymp \sqrt{t_*-t}$, so that $2 \delta_+(s_*) \asymp \sqrt{\eta}$ for the choice of $s_*$ we made above.  
Up to step $k_\#$, we have that $\cQ$ is within $C_{10} \eta$ from $\cZ_{t*}$. Therefore, $\cQ$ is within distance $O(\sqrt{\eta} + \eta) = O(\sqrt{\eta})$ of $\cZ_{t_*}$.
And vice-versa, $\cZ_{t_*}$ is within Hausdorff distance $O(\sqrt{\eta})$ of $\cQ$.

\subsubsection{Uniform convergence}
\label{sec:proof1 uniform}

We continue to use the same notation as in this whole section, except that we make the dependence on the starting point $x$ explicit whenever needed as in, e.g., $\nu(x)$ denoting $\nu$ when associated with $x$. Let $\cA_*$ be the basin of attraction associated with $x_*$, that is, $\cA_* = \{x\in\bbR^d: \lim_{t\to\infty} \gamma_x(t) = x_*\}$. Note that $\cA_*$ is an open set. We fix $s_*\in(0,t_*)$ close enough to $t_*$ that $\cC_*$ is a leaf cluster. Denote $\cC_\# := \cC_{t_\#}(x_*)$.

First we show that $\nu$ is continuous on $\cA_*\setminus \cC_\#$. Let $x$ be any point in $\cA_*\setminus \cC_\#$. Recall that $\|\nabla f(y)\|\ge \nu(x)$ for all $y\in\cT(x) :=\ball(\cZ_{t_\#}(x), \delta_{\rm tube})$. Notice that $\|F(y)\|\le 1/\nu(x)$ and $\|D F(y)\| \le 3\kappa_2/\nu(x)^2$ for all $y\in\cT(x)$, using \lemref{F}. Define $\cV(x):=\ball(\cZ_{t_\#}(x), \delta_{\rm tube}/2)$. Let $y_1,\dots,y_m$ be a $\delta_{\rm tube}/2$-packing of $\cV(x)$, meaning that $\min_{i \ne j} \|y_i - y_j\| > \delta_{\rm tube}/2$ and also that $\max_{y \in \cV(x)} \min_i \|y - y_i\| \le \delta_{\rm tube}/2$. And define $\cW(x) := \bigcup_i \ball(y_i, \delta_{\rm tube}/2)$. 
Note that $\cW(x)$ is open with $\cV(x) \subset \cW(x) \subset \cT(x)$, so that $F(y)\le 1/\nu(x)$ and $\|D F(y)\| \le 3\kappa_2/\nu(x)^2$ for all $y \in \cW(x)$. If $y,z \in \cW(x)$ are such that $\|y-z\| \le \delta_{\rm tube}/4$, there must be $i$ such that $y,z \in \ball(y_i, \delta_{\rm tube}/2)$, and because that ball is convex, we have
\begin{equation}
\|F(y) - F(z)\| 
\le (3\kappa_2/\nu(x)^2) \|y-z\|.
\end{equation}
If, on the other hand, $\|y-z\| > \delta_{\rm tube}/4$, then we can simply write
\begin{equation}
\|F(y) - F(z)\| 
\le 2/\|\nu(x)\|
= \frac{2/\nu(x)}{\delta_{\rm tube}/4} \delta_{\rm tube}/4
\le \frac{8}{\nu(x) \delta_{\rm tube}} \|y-z\|.
\end{equation}
Hence, $F$ is Lipschitz on $\cW(x)$ with corresponding constant $C := \max\{3\kappa_2/\nu(x)^2, 8/(\nu(x)\delta_{\rm tube})\}$.

Take a positive constant $\delta_\diamond \le \frac{1}{2}\exp\{-C (t_\# - t_0)\}\delta_{\rm tube}.$ 
Suppose that
\[
\cH(y) := \big\{\zeta_y(\tau): \tau \in [0,t_\# - t_0], y \in \ball(x,\delta_\diamond)\big\}  \nsubset \cV(x).
\] 
Then there exist $y\in\ball(x,\delta_\diamond)$ and an escaping time $t_\square\in (0,t_\# - t_0)$ such that $\zeta_y(t_\square)\in\partial \cV(x)$ and $\{\zeta_y(\tau): \tau \in [0,t_\square)\} \subset \cV(x)$. This is impossible because applying a standard result on the dependence of the gradient flow on the initial condition, for example, the main theorem in \citep[Sec 17.3]{hirsch2012differential}, we have
\begin{equation}
\|\zeta_x(\tau_\square) - \zeta_y(\tau_\square)\|
\le \|x-y\| \exp(C \tau_\square) < \frac{1}{2} \delta_{\rm tube},
\end{equation}
which would lead to $\zeta_y(t_\square) \in \cV(x)$, a contradiction against the definition of $t_\square$ as $\cV(x)$ is an open set. 
Therefore we must have $\cH(y) \subset \cV(x)$, and 
\begin{equation}
\label{initial_continuous}
\|\zeta_x(\tau) - \zeta_y(\tau)\|
\le \|x-y\| \exp(C \tau) \le \frac{1}{2} \delta_{\rm tube}, \quad \forall \tau \in [0,t_\# - t_0], \quad \forall y \in \ball(x,\delta_\diamond).
\end{equation}

For $y \in \ball(x,\delta_\diamond)$, without loss of generality, suppose that $t_0 = f(x)\ge f(y)$. Notice that 
\begin{equation}
\cZ_{t_\#}(y) = \cZ_{t_\#-(f(y) - t_0)}(y) \,\bigcup\, \big\{\zeta_{y}(t): \; t\in[t_\# - t_0,t_\# - f(y)]\big\}.
\end{equation}
With notation $R = \sup_{t\in[t_\# - t_0,t_\# - f(y)]} \|\zeta_{y}(t) - \zeta_{y}(t_\# - t_0)\|$, we can write
\begin{align}
\label{haus_dist}
d_H(\cZ_{t_\#}(x), \cZ_{t_\#}(y)) 
&\le  d_H(\cZ_{t_\#}(x), \cZ_{t_\#-[f(y) - t_0]}(y) ) + R \\
&\le  \sup_{t\in[0,t_\# - t_0]}\|\zeta_x(t) - \zeta_{y}(t)\| + R\\
&\le  \|x-y\| \exp\{C (t_\#-t_0)\} + R, \label{haus_dist_last}
\end{align}
where the last equality is a result of \eqref{initial_continuous}.

Below we further require that $\delta_\diamond \le \frac{1}{2\kappa_1}\delta_{\rm tube}\nu(x)$. We will show that 
\begin{equation}
\label{Gy_subset}
\cG_y: = \big\{\zeta_y(t): t\in [t_\# - t_0,t_\# - f(y)]\big\} \subset \cT(x).
\end{equation}
If this is true, then the length of $\cG_y$ is
\begin{align}
{\rm length}(\cG_y)&=  \int_{t_\# - t_0}^{t_\# - f(y)} \|\dot\zeta_y(t)\| \d t \\
&=  \int_{t_\# - t_0}^{t_\# - f(y)} \|\nabla f(\zeta_y(t))\|^{-1} \d t \\
&\le   (f(x) - f(y))/\nu(x) \\
&\le  \kappa_1  \|x-y\|/\nu(x) \\ \label{G_y bound}
&\le  \kappa_1  \delta_\diamond/\nu(x) \le \tfrac{1}{2} \delta_{\rm tube},
\end{align}
where in the 3rd line we used the fact that $t_0 = f(x)$, and in the fourth line we used \eqref{kappa1}.
The above calculation confirms that indeed (\ref{Gy_subset}) must be true, because otherwise there exists an escaping time $t_\triangle \in (t_\# - t_0,t_\# - f(y))$ such that $\zeta_y(t_\triangle) \in \partial \cT$ and $\widetilde \cG_y:=\{\zeta_y(t): t\in [t_\# - t_0,t_\triangle)\} \subset \cT$, which will lead to a contradiction with the fact that $\zeta_y(t_\# - t_0) \in \cV(x)$ and ${\rm length}(\widetilde \cG_y) < \frac{1}{2}\delta_{\rm tube}$, with the latter following from a similar calculation as above after replacing $t_\# - f(y)$ by $t_\triangle$.
Now, using \eqref{G_y bound} along the way,
\begin{align}
R 
%
 %
&= \sup_{t\in[t_\# - t_0,t_\# - f(y)]} \Big\|\int_{t_\# - t_0}^t \dot\zeta_y(t) \d t \Big\| \\
&\le {\rm length}(\cG_y) \\
&\le \frac{\kappa_1}{\nu(x)}\|x-y\|.
\end{align}

Combining this with \eqref{haus_dist_last}, we obtain
\begin{equation}
d_H(\cZ_{t_\#}(x), \cZ_{t_\#}(y)) \le C_\dagger\|x-y\|,
\end{equation}
where $C_\dagger = \exp\{C (t_\#-t_0)\} + \kappa_1/\nu(x)$. Hence
\begin{align}
&|\nu(x) - \nu(y)| \\
&\le  \max\Big\{\sup_{v\in \cZ_{t_\#}(x)} \; \inf_{w\in \cZ_{t_\#}(y)} | \|\nabla f(v)\| - \|\nabla f(w)\| |,\; \sup_{v\in \cZ_{t_\#}(y)} \; \inf_{w\in \cZ_{t_\#}(x)} | \|\nabla f(v)\| - \|\nabla f(w)\| | \Big\} \\
&\le  \kappa_2 d_H(\cZ_{t_\#}(x), \cZ_{t_\#}(y)) \\
&\le  \kappa_2C_\dagger\|x-y\|.
\end{align}
We have shown that $\nu$ is a continuous function on $\cA_*\setminus \cC_\#$. 

Let $\cA$ be the union of the basins of attraction for all the local modes, and $\cB = \cA^{\complement}$, which is also the boundary of $\cA$. By \lemref{basins}, $\cB$ has zero Lebesgue measure. Let $\cC$ be the union of all the leaf clusters. For $t,\delta> 0$, define $\Gamma_{\delta,t} = \up_t \bigcap \ball(\cB,\delta)^{\complement} \setminus \cC^\circ$, which is a compact set. Define $\beta(\delta,t): = \inf_{x\in \Gamma_{\delta,t}} \nu(x)$, which is positive for any $t>0$ and $\delta>0$ small enough that $\Gamma_{\delta,t}$ is not empty, since $\Gamma_{\delta,t}$ is a compact set, and $\nu$ is continuous on an open set containing $\Gamma_{\delta,t}$ as shown above. Based on the proof of \thmref{alg1}, in order to guarantee that Algorithm 1 returns the correct mode for any $x\in \Gamma_{\delta,t}$, we only need to choose $\eta>0$ small enough that $\eta\le C_0 \beta(\delta,t)^2$ for some constant $C_0>0$. For an arbitrarily small but fixed $\epsilon>0$, let $t_\epsilon$ be the largest $t>0$ such that the probability measure of $\up_t^{\complement}$ is not larger than $\epsilon$, and let $\delta_\eta$ be the smallest $\delta>0$ such that $\eta\le C_0 \beta(\delta,t_\epsilon)^2$, and define $\Omega_\eta = \Gamma_{\delta_\eta,t_\epsilon}\bigcup \cC^\circ=\up_{t_\epsilon} \bigcap \ball(\cB,\delta)^{\complement}$. Using parameter $\eta$, for any starting point $x\in\Omega_\eta$, we always have the correct clustering result using Algorithm 1. Since $\beta_\epsilon(\delta):=\beta(\delta,t_\epsilon)$ is non-decreasing, $\beta_\epsilon(\delta) \to 0$ as $\delta\to 0$, and $\beta_\epsilon(\delta) >0$ if $\delta>0$, it is clear that $\delta_\eta \to 0$ as $\eta\to0$. Note that any point in $\cB$ is in the basin of attraction of some critical point that is not a local mode. Under the assumption $f(x)\to0$ as $\|x\| \to\infty$, there exists $r<\infty$ large enough  that all the points in $\up_{t_\epsilon}\bigcap\cB$ are in the basins of attraction of the critical points in $\ball(\underline{0},r)$, where $\underline{0}$ is the origin. Using a standard approach we can build a Morse function $\hbar$ that has bounded support and coincides with $f$ on $\ball(\underline{0},r)$. This way we understand $\up_{t_\epsilon}\bigcap\cB$ as a subset of $\cB_{\hbar}$, which denotes the boundary of basins of attraction of all local modes of $\hbar$. Based on \citep[Th 4.2]{banyaga2013lectures}, $\cB_{\hbar}$ consists of finitely many $k$-dimensional manifolds associated with the critical points of $\hbar$ that are not local modes, where $k=0,\cdots,d-1$. Hence the Lebesgue measure of $\up_{t_\epsilon}\bigcap\ball(\cB,\delta_\eta)$ is of order $O(\delta_\eta)=o(1)$ as $\delta_\eta \to 0$. Using the boundedness of $f$, the probability measure of $\up_{t_\epsilon}\bigcap\ball(\cB,\delta_\eta)$ is also of order $o(1)$, so that the probability measure of $\Omega_\eta$ is lower bounded by $1-\epsilon+o(1)$ as $\eta\to0$. Since $\epsilon$ is arbitrarily small, we conclude the proof.

\subsection{Proof of \thmref{alg2}}
\label{sec:proof2}
The proof of \thmref{alg2} is similar to that of \thmref{alg1} given in \secref{proof1}. As much as we can, we reuse the same notation. Of course, here $(q_k)$ denote the sequence generated by \algref{2} instead, and we define $t_k$ as the level of $q_k$ so that $f(q_k) = t_k$. Otherwise, $\zeta$ has the same meaning, and so does $z_k = \zeta(t_k)$, which again has level $t_k$ and is compared with $q_k$ in a recursive manner.

A natural choice for an arbitrary neighborhood is a ball, but to keep the parallel with \secref{proof1}, we use a cluster, and this is justified in view of \lemref{cluster balls}. 
Therefore, take $\s \in (t_0, t_*)$ arbitrarily close to $t_*$, and let $\cC_*$ be as before. We want to show that the sequence enters $\cC_*$, eventually. 
Define $t_\#$, $\nu$, and $\cT$ as before, and note that it remains true that $\|\nabla f(y)\| \ge \nu$ for all $y \in \cT$.
We also define $k_\#$ in exactly the same way, and keep the same notation --- even though things are here `parameterized' by $\eps$ but in view of the definition of $\eta$ below in \eqref{eta def}, this is just fine. Note that we do not yet have a good control over $k_\#$.

Here the levels are not regularly discretized, and so we denote $\eta_k := t_k - t_{k-1}$. By definition and \eqref{kappa1},
\begin{align}
\label{eta def}
0 
\le t_k-t_{k-1}
= f(q_k) - f(q_{k-1})
\le \kappa_1 \|q_k-q_{k-1}\|
\le \kappa_1 \eps
=: \eta.
\end{align}
Therefore, $\eta_k \le \eta$ for all applicable $k$, so that $\eta$ plays the same role as before. We note that $\eta$ is simply proportional to $\eps$, and so saying `when $\eta$ is small enough' is completely equivalent to saying `when $\eps$ is small enough'.
Furthermore, by how the sequence $(q_k)$ is built and by \eqref{taylor2}, if it is the case that $q_{k-1} \in \cT$, then we have
\begin{align}
f(q_k) 
&\ge f(q_{k-1}  + \eps N(q_{k-1})) \\
&\ge f(q_{k-1}) + \eps \|\nabla f(q_{k-1})\| - \tfrac12 \kappa_2 \eps^2 \\
&\ge f(q_{k-1}) + \eps \nu - \tfrac12 \kappa_2 \eps^2 \\
&\ge f(q_{k-1}) + \tfrac12 \eps \nu,
\end{align}
assuming $\eps$ is small enough.
So that we also have the lower bound $\eta_k \ge (\nu/2) \eps$ whenever $q_{k-1} \in \cT$. 

In what follows, not all the numbered constants have the same exact meaning as in \secref{proof1}. We only keep the numbering for the reader's orientation.
Following the proof in \secref{alg1}, instead of \eqref{z diff}, here we have
\begin{align}
z_k - z_{k-1}
&= \eta_k F(z_{k-1}) \pm C_1 \eta^2, \label{z diff2}
\end{align}
valid for any $k \le k_\#$.
And instead of \eqref{q diff}, we get
\begin{align}
q_k - q_{k-1}
&= \eta_k F(q_{k-1}) \pm C_3 \eta^2, \label{q diff2}
\end{align}
We then simply follow the same arguments, augmented by the following. Since in the recursion we assume that $d_{k-1} \le \delta_{\rm tube}$, as part of the recursion we also have $q_{k-1} \in \cT$, as so $\eta_k \ge (\nu/2) \eps$. And since $t_k = \eta_k + \cdots + \eta_1 + t_0$, this implies that $k_\# \le (t_\#-t_0)/(\nu/2) \eps = (t_\#-t_0)/(\nu/2\kappa_1) \eta$.
Thus, the bound $a_{k_\#} \le C_6 \eta$ resulting from \eqref{a_bound} holds in a similar way, because $\eta k_\#$ is upper-bounded by the constant $(t_\#-t_0)/(\nu/2\kappa_1)$. 
We thus arrive at \eqref{d_final}.

The rest of the proof is identical.
We thus conclude that, when $\eps$ is small enough, the sequence $(q_k)$ enters the cluster $\cC_*$. 
In addition, we can show as we did in \secref{alg1} that the polygonal line connecting the sequence $(q_k : k \le k_\#)$ is within distance $O(\eta) = O(\eps)$ of $\cZ_{t_*}$.

It remains to show that the sequence ends at $x_*$ when $\eps$ is small enough; and then to show that the polygonal line converges to the gradient line as $\eps$ approaches 0.

\subsubsection{Algorithm returns $x_*$}
Take $\s > 0$ small enough that $\cC_*$ is a leaf cluster. 
Assume $\eps$ is small enough that $(q_k)$ enters $\cC_*$, and take it even smaller if needed to have $\eps < \delta_*$, where, as before, $\delta_*$ is the minimum separation between two clusters at level $\s$.
Then, because $(f(q_k))$ is non-decreasing and $\|q_k - q_{k-1}\| \le \eps < \delta_*$, once $(q_k)$ enters $\cC_*$, it must remain there.  
It therefore suffices to show that $f(q_k) \to t_*$ since $x_*$ is the only point in $\cC_*$ with a value of $t_*$ and all other points have a smaller value. Let $t_\infty := \sup_k f(q_k) = \lim_k f(q_k)$ and suppose for contradiction that $t_\infty < t_*$. Consider the set $\cA := \{y : \s \le f(y) \le t_\infty\}$. Since $\cA \subset \cC_*$ and $x_* \notin \cA$, $\cA$ has no critical point, and because it is compact, $\nu_\ddag := \inf\{\|\nabla f(y)\| : y \in \cA\} > 0$.
Therefore, if $\eps_\ddag := \min\{\nu_\ddag/\kappa_2, \eps\}$, by \eqref{taylor_N}, for any $y \in \cA$, $f(\tilde y) \ge f(y) + \eta_\ddag$ with $\tilde y := y+ \eps_\ddag N(y)$ and $\eta_\ddag := \nu_\ddag \eps_\ddag - (\kappa_2/2) \eps_\ddag^2 > 0$. 
Now, take $k$ large enough that $f(q_{k-1}) > t_\infty - \eta_\ddag/2$.
We then have $f(\tilde q_{k-1}) \ge f(q_{k-1}) + \eta_\ddag > t_\infty + \eta_\ddag/2$ and $\|\tilde q_{k-1} - q_{k-1}\| = \eps_\ddag \le \eps$. But then the sequence moves to $q_k$ with $f(q_k) \le t_\infty$ when it could have moved instead to $\tilde q_{k-1}$ with $f(\tilde q_{k-1}) > t_\infty$, contradicting the rules governing \algref{2}.

\subsubsection{Convergence}
The convergence of the polygonal line to the gradient line is proved in essentially the same way as in \secref{proof1 convergence}, and we omit further details.

\subsubsection{Bound on the Hausdorff distance}
\label{sec:proof2 Hausdorff bound}
A bound of order $O(\eps)$ on the Hausdorff distance between the polygonal line and the gradient line can be obtained in essentially the same way as in \secref{proof1 Hausdorff bound}, and we omit further details.

\subsubsection{Uniform convergence}
\label{sec:proof2 uniform}

The arguments are truly analogous as those given in \secref{proof1 uniform}, and details are thus omitted.

\subsection*{Acknowledgments}
We are grateful to José Chacón for stimulating discussions.  

\small
\bibliographystyle{chicago}
\bibliography{ref}

\begin{thebibliography}{}

\bibitem[\protect\citeauthoryear{Arias-Castro, Mason, and
  Pelletier}{Arias-Castro et~al.}{2016}]{arias2016estimation}
Arias-Castro, E., D.~Mason, and B.~Pelletier (2016).
\newblock On the estimation of the gradient lines of a density and the
  consistency of the mean-shift algorithm.
\newblock {\em Journal of Machine Learning Research\/}~{\em 17\/}(1),
  1487--1514.

\bibitem[\protect\citeauthoryear{Arias-Castro and Qiao}{Arias-Castro and
  Qiao}{2021}]{arias2021level}
Arias-Castro, E. and W.~Qiao (2021).
\newblock Level sets or gradient lines? {}a unifying view of modal clustering.
\newblock {\em arXiv preprint arXiv:2109.08362\/}.

\bibitem[\protect\citeauthoryear{Banyaga and Hurtubise}{Banyaga and
  Hurtubise}{2013}]{banyaga2013lectures}
Banyaga, A. and D.~Hurtubise (2013).
\newblock {\em Lectures on Morse Homology}.
\newblock Springer Science \& Business Media.

\bibitem[\protect\citeauthoryear{Burman and Polonik}{Burman and
  Polonik}{2009}]{burman2009multivariate}
Burman, P. and W.~Polonik (2009).
\newblock Multivariate mode hunting: Data analytic tools with measures of
  significance.
\newblock {\em Journal of Multivariate Analysis\/}~{\em 100\/}(6), 1198--1218.

\bibitem[\protect\citeauthoryear{Carreira-Perpi{\~n}{\'a}n}{Carreira-Perpi{\~n}{\'a}n}{2000}]{carreira2000mode}
Carreira-Perpi{\~n}{\'a}n, M.~A. (2000).
\newblock Mode-finding for mixtures of {Gaussian} distributions.
\newblock {\em IEEE Transactions on Pattern Analysis and Machine
  Intelligence\/}~{\em 22\/}(11), 1318--1323.

\bibitem[\protect\citeauthoryear{Carreira-Perpin{\'a}n}{Carreira-Perpin{\'a}n}{2006}]{carreira2006fast}
Carreira-Perpin{\'a}n, M.~A. (2006).
\newblock Fast nonparametric clustering with {Gaussian} blurring mean-shift.
\newblock In {\em International Conference on Machine Learning}, pp.\
  153--160.

\bibitem[\protect\citeauthoryear{Carreira-Perpin{\'a}n}{Carreira-Perpin{\'a}n}{2008}]{carreira2008generalised}
Carreira-Perpin{\'a}n, M.~A. (2008).
\newblock Generalised blurring mean-shift algorithms for nonparametric
  clustering.
\newblock In {\em 2008 IEEE Conference on Computer Vision and Pattern
  Recognition}, pp.\  1--8.

\bibitem[\protect\citeauthoryear{Carreira-Perpi{\~n}{\'a}n}{Carreira-Perpi{\~n}{\'a}n}{2015}]{carreira2015clustering}
Carreira-Perpi{\~n}{\'a}n, M.~{\'A}. (2015).
\newblock Clustering methods based on kernel density estimators: Mean-shift
  algorithms.
\newblock In C.~Hennig, M.~Meila, F.~Murtagh, and R.~Rocci (Eds.), {\em
  Handbook of Cluster Analysis}, pp.\  404--439. Chapman and Hall/CRC.

\bibitem[\protect\citeauthoryear{Chac{\'o}n}{Chac{\'o}n}{2015}]{chacon2015population}
Chac{\'o}n, J.~E. (2015).
\newblock A population background for nonparametric density-based clustering.
\newblock {\em Statistical Science\/}~{\em 30\/}(4), 518--532.

\bibitem[\protect\citeauthoryear{Chac{\'o}n}{Chac{\'o}n}{2019}]{chacon2019mixture}
Chac{\'o}n, J.~E. (2019).
\newblock Mixture model modal clustering.
\newblock {\em Advances in Data Analysis and Classification\/}~{\em 13\/}(2),
  379--404.

\bibitem[\protect\citeauthoryear{Chac{\'o}n}{Chac{\'o}n}{2020}]{chacon2020modal}
Chac{\'o}n, J.~E. (2020).
\newblock The modal age of statistics.
\newblock {\em International Statistical Review\/}~{\em 88\/}(1), 122--141.

\bibitem[\protect\citeauthoryear{Chaudhuri, Dasgupta, Kpotufe, and
  Von~Luxburg}{Chaudhuri et~al.}{2014}]{chaudhuri2014consistent}
Chaudhuri, K., S.~Dasgupta, S.~Kpotufe, and U.~Von~Luxburg (2014).
\newblock Consistent procedures for cluster tree estimation and pruning.
\newblock {\em IEEE Transactions on Information Theory\/}~{\em 60\/}(12),
  7900--7912.

\bibitem[\protect\citeauthoryear{Chen, Genovese, and Wasserman}{Chen
  et~al.}{2017a}]{chen2017density}
Chen, Y.-C., C.~R. Genovese, and L.~Wasserman (2017a).
\newblock Density level sets: Asymptotics, inference, and visualization.
\newblock {\em Journal of the American Statistical Association\/}~{\em
  112\/}(520), 1684--1696.

\bibitem[\protect\citeauthoryear{Chen, Genovese, and Wasserman}{Chen
  et~al.}{2017b}]{chen2017statistical}
Chen, Y.-C., C.~R. Genovese, and L.~Wasserman (2017b).
\newblock Statistical inference using the {Morse}--{Smale} complex.
\newblock {\em Electronic Journal of Statistics\/}~{\em 11\/}(1), 1390--1433.

\bibitem[\protect\citeauthoryear{Cheng, Hall, and Hartigan}{Cheng
  et~al.}{2004}]{cheng2004estimating}
Cheng, M.-Y., P.~Hall, and J.~A. Hartigan (2004).
\newblock Estimating gradient trees.
\newblock In {\em A Festschrift for Herman Rubin}, pp.\  237--249. Institute of
  Mathematical Statistics.

\bibitem[\protect\citeauthoryear{Cheng}{Cheng}{1995}]{cheng1995}
Cheng, Y. (1995).
\newblock Mean shift, mode seeking, and clustering.
\newblock {\em IEEE Transactions on Pattern Analysis and Machine
  Intelligence\/}~{\em 17\/}(8), 790--799.

\bibitem[\protect\citeauthoryear{Comaniciu and Meer}{Comaniciu and
  Meer}{2002}]{comaniciu2002mean}
Comaniciu, D. and P.~Meer (2002).
\newblock Mean shift: A robust approach toward feature space analysis.
\newblock {\em IEEE Transactions on Pattern Analysis and Machine
  Intelligence\/}~{\em 24\/}(5), 603--619.

\bibitem[\protect\citeauthoryear{D{\"u}mbgen and Walther}{D{\"u}mbgen and
  Walther}{2008}]{dumbgen2008multiscale}
D{\"u}mbgen, L. and G.~Walther (2008).
\newblock Multiscale inference about a density.
\newblock {\em Annals of Statistics\/}~{\em 36\/}(4), 1758--1785.

\bibitem[\protect\citeauthoryear{Eldridge, Belkin, and Wang}{Eldridge
  et~al.}{2015}]{eldridge2015beyond}
Eldridge, J., M.~Belkin, and Y.~Wang (2015).
\newblock Beyond {H}artigan consistency: Merge distortion metric for
  hierarchical clustering.
\newblock In {\em Conference on Learning Theory}, pp.\  588--606.

\bibitem[\protect\citeauthoryear{Federer}{Federer}{1959}]{federer1959curvature}
Federer, H. (1959).
\newblock Curvature measures.
\newblock {\em Transactions of the American Mathematical Society\/}~{\em
  93\/}(3), 418--491.

\bibitem[\protect\citeauthoryear{Fukunaga and Hostetler}{Fukunaga and
  Hostetler}{1975}]{fukunaga1975}
Fukunaga, K. and L.~Hostetler (1975).
\newblock The estimation of the gradient of a density function, with
  applications in pattern recognition.
\newblock {\em IEEE Transactions on Information Theory\/}~{\em 21\/}(1),
  32--40.

\bibitem[\protect\citeauthoryear{Genovese, Perone-Pacifico, Verdinelli, and
  Wasserman}{Genovese et~al.}{2016}]{genovese2016non}
Genovese, C.~R., M.~Perone-Pacifico, I.~Verdinelli, and L.~Wasserman (2016).
\newblock Non-parametric inference for density modes.
\newblock {\em Journal of the Royal Statistical Society: Series B\/}~{\em
  78\/}(1), 99--126.

\bibitem[\protect\citeauthoryear{Hartigan}{Hartigan}{1975}]{hartigan1975clustering}
Hartigan, J. (1975).
\newblock {\em Clustering Algorithms}.
\newblock John Wiley \& Sons.

\bibitem[\protect\citeauthoryear{Hartigan and Hartigan}{Hartigan and
  Hartigan}{1985}]{hartigan1985dip}
Hartigan, J. and P.~Hartigan (1985).
\newblock The dip test of unimodality.
\newblock {\em Annals of Statistics\/}~{\em 13\/}(1), 70--84.

\bibitem[\protect\citeauthoryear{Hirsch, Smale, and Devaney}{Hirsch
  et~al.}{2012}]{hirsch2012differential}
Hirsch, M.~W., S.~Smale, and R.~L. Devaney (2012).
\newblock {\em Differential Equations, Dynamical Systems, and an Introduction
  to Chaos\/} (3rd ed.).
\newblock Elsevier Science \& Technology.

\bibitem[\protect\citeauthoryear{Jiang}{Jiang}{2017}]{NIPS2017_f457c545}
Jiang, H. (2017).
\newblock On the consistency of quick shift.
\newblock In {\em Advances in Neural Information Processing Systems}.

\bibitem[\protect\citeauthoryear{Jiang, Jang, and Kpotufe}{Jiang
  et~al.}{2018}]{jiang2018quickshift++}
Jiang, H., J.~Jang, and S.~Kpotufe (2018).
\newblock Quickshift++: Provably good initializations for sample-based mean
  shift.
\newblock In {\em International Conference on Machine Learning}, pp.\
  2294--2303. PMLR.

\bibitem[\protect\citeauthoryear{Koontz and Fukunaga}{Koontz and
  Fukunaga}{1972}]{koontz1972nonparametric}
Koontz, W.~L. and K.~Fukunaga (1972).
\newblock A nonparametric valley-seeking technique for cluster analysis.
\newblock {\em IEEE Transactions on Computers\/}~{\em C-21\/}(2), 171--178.

\bibitem[\protect\citeauthoryear{Koontz, Narendra, and Fukunaga}{Koontz
  et~al.}{1976}]{koontz1976graph}
Koontz, W. L.~G., P.~M. Narendra, and K.~Fukunaga (1976).
\newblock A graph-theoretic approach to nonparametric cluster analysis.
\newblock {\em IEEE Transactions on Computers\/}~{\em C-25\/}(09), 936--944.

\bibitem[\protect\citeauthoryear{Li, Ray, and Lindsay}{Li
  et~al.}{2007}]{li2007nonparametric}
Li, J., S.~Ray, and B.~G. Lindsay (2007).
\newblock A nonparametric statistical approach to clustering via mode
  identification.
\newblock {\em Journal of Machine Learning Research\/}~{\em 8\/}(59),
  1687--1723.

\bibitem[\protect\citeauthoryear{Mason and Polonik}{Mason and
  Polonik}{2009}]{mason2009asymptotic}
Mason, D.~M. and W.~Polonik (2009).
\newblock Asymptotic normality of plug-in level set estimates.
\newblock {\em Annals of Applied Probability\/}~{\em 19\/}(3), 1108--1142.

\bibitem[\protect\citeauthoryear{Menardi}{Menardi}{2016}]{menardi2016review}
Menardi, G. (2016).
\newblock A review on modal clustering.
\newblock {\em International Statistical Review\/}~{\em 84\/}(3), 413--433.

\bibitem[\protect\citeauthoryear{Milnor}{Milnor}{1963}]{milnor1963morse}
Milnor, J. (1963).
\newblock {\em Morse Theory}.
\newblock Princeton University Press.

\bibitem[\protect\citeauthoryear{Minnotte}{Minnotte}{1997}]{minnotte1997nonparametric}
Minnotte, M.~C. (1997).
\newblock Nonparametric testing of the existence of modes.
\newblock {\em Annals of Statistics\/}~{\em 25\/}(4), 1646--1660.

\bibitem[\protect\citeauthoryear{Pearson}{Pearson}{1895}]{pearson1895}
Pearson, K. (1895).
\newblock Contributions to the mathematical theory of evolution ii: Skew
  variation in homogeneous material.
\newblock {\em Philosophical Transactions of the Royal Society of London:
  Series A\/}~(186), 343--414.

\bibitem[\protect\citeauthoryear{Polonik}{Polonik}{1995}]{polonik1995measuring}
Polonik, W. (1995).
\newblock Measuring mass concentrations and estimating density contour
  clusters: An excess mass approach.
\newblock {\em Annals of Statistics\/}~{\em 23\/}(3), 855--881.

\bibitem[\protect\citeauthoryear{Rigollet and Vert}{Rigollet and
  Vert}{2009}]{rigollet2009optimal}
Rigollet, P. and R.~Vert (2009).
\newblock Optimal rates for plug-in estimators of density level sets.
\newblock {\em Bernoulli\/}~{\em 15\/}(4), 1154--1178.

\bibitem[\protect\citeauthoryear{Rinaldo, Singh, Nugent, and Wasserman}{Rinaldo
  et~al.}{2012}]{rinaldo2012stability}
Rinaldo, A., A.~Singh, R.~Nugent, and L.~Wasserman (2012).
\newblock Stability of density-based clustering.
\newblock {\em Journal of Machine Learning Research\/}~{\em 13}, 905.

\bibitem[\protect\citeauthoryear{Rinaldo and Wasserman}{Rinaldo and
  Wasserman}{2010}]{rinaldo2010generalized}
Rinaldo, A. and L.~Wasserman (2010).
\newblock Generalized density clustering.
\newblock {\em Annals of Statistics\/}~{\em 38\/}(5), 2678--2722.

\bibitem[\protect\citeauthoryear{Roberts}{Roberts}{1997}]{roberts1997parametric}
Roberts, S.~J. (1997).
\newblock Parametric and non-parametric unsupervised cluster analysis.
\newblock {\em Pattern Recognition\/}~{\em 30\/}(2), 261--272.

\bibitem[\protect\citeauthoryear{Shapira, Avidan, and Shamir}{Shapira
  et~al.}{2009}]{shapira2009mode}
Shapira, L., S.~Avidan, and A.~Shamir (2009).
\newblock Mode-detection via median-shift.
\newblock In {\em International Conference on Computer Vision}, pp.\
  1909--1916.

\bibitem[\protect\citeauthoryear{Sheikh, Khan, and Kanade}{Sheikh
  et~al.}{2007}]{sheikh2007mode}
Sheikh, Y.~A., E.~A. Khan, and T.~Kanade (2007).
\newblock Mode-seeking by medoidshifts.
\newblock In {\em International Conference on Computer Vision}, pp.\  1--8.

\bibitem[\protect\citeauthoryear{Silverman}{Silverman}{1981}]{silverman1981using}
Silverman, B.~W. (1981).
\newblock Using kernel density estimates to investigate multimodality.
\newblock {\em Journal of the Royal Statistical Society: Series B\/}~{\em
  43\/}(1), 97--99.

\bibitem[\protect\citeauthoryear{Singh, Scott, and Nowak}{Singh
  et~al.}{2009}]{singh2009adaptive}
Singh, A., C.~Scott, and R.~Nowak (2009).
\newblock Adaptive {H}ausdorff estimation of density level sets.
\newblock {\em Annals of Statistics\/}~{\em 37\/}(5B), 2760--2782.

\bibitem[\protect\citeauthoryear{Sriperumbudur and Steinwart}{Sriperumbudur and
  Steinwart}{2012}]{sriperumbudur2012consistency}
Sriperumbudur, B. and I.~Steinwart (2012).
\newblock Consistency and rates for clustering with {DBSCAN}.
\newblock In {\em Artificial Intelligence and Statistics}, pp.\  1090--1098.

\bibitem[\protect\citeauthoryear{Steinwart}{Steinwart}{2011}]{steinwart2011adaptive}
Steinwart, I. (2011).
\newblock Adaptive density level set clustering.
\newblock In {\em Proceedings of the 24th Annual Conference on Learning
  Theory}, pp.\  703--738.

\bibitem[\protect\citeauthoryear{Steinwart}{Steinwart}{2015}]{steinwart2015fully}
Steinwart, I. (2015).
\newblock Fully adaptive density-based clustering.
\newblock {\em Annals of Statistics\/}~{\em 43\/}(5), 2132--2167.

\bibitem[\protect\citeauthoryear{Stuetzle}{Stuetzle}{2003}]{stuetzle2003estimating}
Stuetzle, W. (2003).
\newblock Estimating the cluster tree of a density by analyzing the minimal
  spanning tree of a sample.
\newblock {\em Journal of Classification\/}~{\em 20\/}(1), 25--47.

\bibitem[\protect\citeauthoryear{Stuetzle and Nugent}{Stuetzle and
  Nugent}{2010}]{stuetzle2010generalized}
Stuetzle, W. and R.~Nugent (2010).
\newblock A generalized single linkage method for estimating the cluster tree
  of a density.
\newblock {\em Journal of Computational and Graphical Statistics\/}~{\em
  19\/}(2), 397--418.

\bibitem[\protect\citeauthoryear{Tsybakov}{Tsybakov}{1990}]{tsybakov1990recursive}
Tsybakov, A.~B. (1990).
\newblock Recursive estimation of the mode of a multivariate distribution.
\newblock {\em Problemy Peredachi Informatsii\/}~{\em 26\/}(1), 38--45.

\bibitem[\protect\citeauthoryear{Tsybakov}{Tsybakov}{1997}]{tsybakov1997nonparametric}
Tsybakov, A.~B. (1997).
\newblock On nonparametric estimation of density level sets.
\newblock {\em Annals of Statistics\/}~{\em 25\/}(3), 948--969.

\bibitem[\protect\citeauthoryear{Vedaldi and Soatto}{Vedaldi and
  Soatto}{2008}]{vedaldi2008quick}
Vedaldi, A. and S.~Soatto (2008).
\newblock Quick shift and kernel methods for mode seeking.
\newblock In {\em European Conference on Computer Vision}, pp.\  705--718.

\bibitem[\protect\citeauthoryear{Walther}{Walther}{1997}]{walther1997granulometric}
Walther, G. (1997).
\newblock Granulometric smoothing.
\newblock {\em Annals of Statistics\/}~{\em 25\/}(6), 2273--2299.

\bibitem[\protect\citeauthoryear{Wang, Lu, and Rinaldo}{Wang
  et~al.}{2019}]{wang2019dbscan}
Wang, D., X.~Lu, and A.~Rinaldo (2019).
\newblock {DBSCAN}: Optimal rates for density-based cluster estimation.
\newblock {\em Journal of Machine Learning Research\/}~{\em 20}, 1--50.

\end{thebibliography}

\end{document}